\renewcommand{\mathbb}{\mathds}
\DeclareMathAlphabet{\mathsc}{U}{rsfs}{m}{n}
\theoremstyle{plain}
\newtheorem{thm}{Theorem}[section]
\newtheorem{cor}[thm]{Corollary} 
\newtheorem{prp}[thm]{Proposition} 
\newtheorem{lem}[thm]{Lemma} 
\theoremstyle{definition}
\theoremstyle{remark} 
\newtheorem{rmk}[thm]{Remark}
\newtheorem{exa}[thm]{Example}
\newcommand{\mm}{\mathfrak{m}}
\renewcommand{\AA}{\mathbb{A}}
\newcommand{\CC}{\mathbb{C}}
\newcommand{\KK}{\mathbb{K}}
\newcommand{\NN}{\mathbb{N}}
\newcommand{\ZZ}{\mathbb{Z}}
\newcommand{\A}{\mathcal{A}}
\newcommand{\B}{\mathcal{B}}
\newcommand{\F}{\mathcal{F}}
\newcommand{\M}{\mathcal{M}}
\renewcommand{\O}{\mathcal{O}}
\newcommand{\an}{\mathrm{an}}
\newcommand{\ol}[1]{{\overline{#1}}}
\newcommand{\ul}[1]{{\underline{#1}}}
\newcommand{\oul}[1]{{\overline{\underline{#1}}}}
\newcommand{\wh}[1]{{\widehat{#1}}}
\newcommand{\wt}[1]{{\widetilde{#1}}}
\newcommand{\fs}[1]{{\left[\!\left[#1\right]\!\right]}}
\newcommand{\set}[1]{{\left\{#1\right\}}}
\newcommand{\ideal}[1]{{\left\langle#1\right\rangle}}
\newcommand{\onto}{\twoheadrightarrow}
\newcommand{\xmid}{\;\middle|\;}
\DeclareMathOperator{\gr}{gr}
\DeclareMathOperator{\height}{height}
\DeclareMathOperator{\Hom}{Hom}
\DeclareMathOperator{\Spec}{Spec}
\DeclareMathOperator{\id}{id}
\DeclareMathOperator{\ord}{ord}
\DeclareMathOperator{\inp}{inp}
\begin{document}

\title[Set-theoretic complete intersection curves]{Deforming monomial space curves into set-theoretic complete intersection singularities}

\author[M.~Granger]{Michel Granger}
\address{
M.~Granger\\
Universit\'e d'Angers, D\'epartement de Math\'ematiques\\
LAREMA, CNRS UMR n\textsuperscript{o}6093\\
2 Bd Lavoisier\\
49045 Angers\\
France
}
\email{\href{mailto:granger@univ-angers.fr}{granger@univ-angers.fr}}

\author[M.~Schulze]{Mathias Schulze}
\address{M.~Schulze\\
Department of Mathematics\\
TU Kaiserslautern\\
67663 Kaiserslautern\\
Germany}
\email{\href{mailto:mschulze@mathematik.uni-kl.de}{mschulze@mathematik.uni-kl.de}}


\subjclass[2010]{Primary 32S30; Secondary 14H50, 20M25}

\keywords{Set-theoretic complete intersection, space curve, singularity, deformation, lattice ideal, determinantal variety}

\begin{abstract}
We deform monomial space curves in order to construct examples of set-theoretical complete intersection space curve singularities.
As a by-product we describe an inverse to Herzog's construction of minimal generators of non-complete intersection numerical semigroups with three generators. 
\end{abstract}

\maketitle


\numberwithin{equation}{section}

\section{Introduction}

It is a classical problem in algebraic geometry to determine the minimal number of equations that define a variety.
The codimension is a lower bound for this number which is reached in case of set-theoretic complete intersections.
Let $I$ be an ideal in a polynomial ring or a regular analytic algebra over a field $\KK$.
Then $I$ is called a set-theoretic complete intersection if $\sqrt{I}=\sqrt{I'}$ for some ideal $I'$ generated by $\height I$ many elements.
The subscheme or analytic subgerm $X$ defined by $I$ is also called a set-theoretic complete intersection in this case.
It is hard to determine whether a given $X$ is a set-theoretic complete intersection.
We address this problem in the case $I\in\Spec\KK\set{x,y,z}$ of irreducible analytic space curve singularities $X$ over an algebraically closed (complete non-discretely valued) field $\KK$.

Cowsik and Nori (see \cite{CN78}) showed that over a perfect field $\KK$ of positive characteristic any algebroid curve and, if $\KK$ is infinite, any affine curve is a set-theoretic complete intersection.
To our knowledge there is no example of an algebroid curve that is not a set-theoretic complete intersection.
Over an algebraically closed field $\KK$ of characteristic zero, Moh (see \cite{Moh82}) showed that an irreducible algebroid curve $\KK\fs{\xi,\eta,\zeta}\subset\KK\fs{t}$ is a set-theoretic complete intersection if the valuations $\ell,m,n=\upsilon(\xi),\upsilon(\eta),\upsilon(\zeta)$ satisfy
\begin{equation}\label{46}
\gcd(\ell,m)=1,\quad\ell<m,\quad(\ell-2)m<n.
\end{equation}

We deform monomial space curves in order to find new examples of set-theoretic complete intersection space curve singularities.
Our main result in Proposition~\ref{29} gives sufficient numerical conditions for the deformation to preserve both the value semigroup and the set-theoretic complete intersection property.
As a consequence we obtain 


\begin{cor}\label{44}
Let $C$ be the irreducible curve germ defined by
\[
\O_C=\KK\set{t^\ell,t^m+t^p,t^n+t^q}\subset\KK\set{t}
\]
where $\gcd(\ell,m)=1$, $p>m$, $q>n$ and there are $a,b\ge2$ such that
\[
\ell=b+2,\quad m=2a+1,\quad n=ab+b+1.
\]
Let $\gamma$ be the conductor of the semigroup $\Gamma=\ideal{\ell,m,n}$ and set
\[
d_1=(a+1)(b+2),\quad\delta=\min\set{p-m,q-n}.
\] 
\begin{enumerate}[(a)]

\item\label{44a} If $d_1+\delta\geq\gamma$, then $\Gamma$ is the value semigroup of $C$.

\item\label{44b} If $d_1+\delta\geq\gamma+\ell$, then $C$ is a set-theoretic complete intersection.

\item\label{44c} If $a,b\ge3$ and $d_1+q-n\ge\gamma+\ell$, then $C$ defined by
\[
p:=\gamma-1-\ell>m
\]
is a non-monomial set-theoretic complete intersection.

\end{enumerate}
\end{cor}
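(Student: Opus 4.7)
The plan is to apply Proposition~\ref{29} to the semigroup $\Gamma=\ideal{\ell,m,n}=\ideal{b+2,2a+1,ab+b+1}$. The first step is to identify the defining binomial relations and the conductor. For $a,b\geq 2$ with $\gcd(\ell,m)=1$, the three generators form a minimal system and $\Gamma$ is not a complete intersection, so Herzog's structure theorem yields exactly three minimal binomial generators of the toric ideal. Using the visible identities $(a+1)\ell=m+n$, $(b+1)m=a\ell+n$ and $2n=\ell+bm$, one reads them off as
\[
x^{a+1}-yz,\qquad y^{b+1}-x^az,\qquad z^2-xy^b,
\]
of $\Gamma$-degrees $d_1=(a+1)(b+2)$, $d_2=(b+1)(2a+1)$, $d_3=2(ab+b+1)$, and an elementary comparison yields $d_1=\min\{d_1,d_2,d_3\}$. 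An explicit expression for the conductor $\gamma$ is extracted from the Apéry set $\mathrm{Ap}(\Gamma,\ell)$.

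Parts \ref{44a} and \ref{44b} are then immediate specializations of Proposition~\ref{29}: the deformation order of the perturbations $t^m\mapsto t^m+t^p$ and $t^n\mapsto t^n+t^q$ is exactly $\delta=\min\{p-m,q-n\}$, and the hypotheses of the proposition translate, via the minimal binomial degree $d_1$, directly into $d_1+\delta\geq\gamma$ for preservation of the value semigroup and $d_1+\delta\geq\gamma+\ell$ for the set-theoretic complete intersection property.

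For part \ref{44c} the plan is to reduce to \ref{44b} with $p:=\gamma-1-\ell$ via two elementary estimates. First, $p>m$ is equivalent to $\gamma>\ell+m+1=2a+b+3$ and follows for $a,b\geq 3$ from the formula for $\gamma$. Second, the inequality $d_1\geq 2\ell+m+1$, equivalent to $b(a-1)\geq 4$, also holds for $a,b\geq 3$ and yields $d_1+(p-m)\geq\gamma+\ell$; combined with the hypothesis $d_1+(q-n)\geq\gamma+\ell$, this forces $d_1+\delta\geq\gamma+\ell$ regardless of which of $p-m$ and $q-n$ realizes $\delta$, so \ref{44b} applies. Non-monomiality of $C$ then follows from the fact that $p=\gamma-1-\ell$ equals the Frobenius number of $\Gamma$ minus $\ell$ and is therefore a gap of $\Gamma$, so the perturbation term $t^p$ cannot be absorbed by any analytic change of the parameter $t$.

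The main obstacle in this plan is pinning down $\gamma$ as an explicit function of $a,b$ and carrying out the resulting numerical verifications for \ref{44c}; once these are done, all three parts reduce mechanically to Proposition~\ref{29}.
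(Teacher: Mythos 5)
Your treatment of parts \ref{44a} and \ref{44b} is correct and essentially the paper's: the binomials $x^{a+1}-yz$, $y^{b+1}-x^az$, $z^2-xy^b$ are the Herzog generators, $d_1<d_3<d_2$, and (with $k=a_1c_2=1$ for this family, which you use implicitly and should state) the conditions of Proposition~\ref{29} collapse to $d_1+\delta\geq\gamma$ resp.\ $d_1+\delta\geq\gamma+\ell$. Your reduction of \ref{44c} to \ref{44b} via $b(a-1)\geq4$ is also the paper's computation. But two steps of \ref{44c} are genuinely missing. First, you never prove $\gamma>\ell+m+1$ (equivalently $p>m$): you defer it to ``the formula for $\gamma$'' extracted from the Ap\'ery set, which you do not supply and yourself flag as the main obstacle. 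The paper needs no closed formula: it notes $n>m+\ell$ and that the open interval $(m+\ell,m+2\ell)$ contains $\ell-1\geq4$ integers of which only a bounded number can lie in $\Gamma$, so the interval contains a gap and $\gamma-1>\ell+m$. Some argument of this kind (or an actually verified formula) has to be carried out.

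Second, and more seriously, your non-monomiality argument is invalid. The claim that ``$p$ is a gap of $\Gamma$, so the perturbation $t^p$ cannot be absorbed by any analytic change of the parameter $t$'' is not a criterion for $C\not\cong C_0$: an isomorphism $C\cong C_0$ is induced by a reparametrization of the normalization together with an arbitrary change of analytic generators of $\O_C$, and perturbations whose exponent is a gap of $\Gamma$ can perfectly well be absorbed this way. The paper's own Example~\ref{50}.\eqref{50a} is exactly such a case: $(t^4,t^5+st^6,t^7)$ has perturbation exponent $6\notin\ideal{4,5,7}$, yet the deformation is trivial after the coordinate change $\tilde x=x+\tfrac{4s}{5}y$ and a reparametrization. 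To rule out $C\cong C_0$ one needs an analytic invariant; the paper uses the set of valuations of the image of $\Omega^1_C$ in $\KK\set{t}\,dt$, which for $C_0$ equals $\Gamma\setminus\set{0}$, while the form $\omega=my\,dx-\ell x\,dy$ on $C$ has valuation $p+\ell=\gamma-1\notin\Gamma$. Without an argument of this type, part \ref{44c} is not established.
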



In the setup of Corollary~\ref{44} Moh's third condition in \eqref{46} becomes $ab<1$ and is trivially false.
Corollary~\ref{44} thus yields an infinite list of new examples of non-monomial set-theoretic complete intersection curve germs.


Let us explain our approach and its context in more detail.
Let $\Gamma$ be a numerical semigroup.
Delorme (see \cite{Del76}) characterized the complete intersection property of $\Gamma$ by a recursive condition.
The complete intersection property holds equivalently for $\Gamma$ and its associated monomial curve $\Spec(\KK[\Gamma])$ (see \cite[Cor.~1.13]{Her70}) and is preserved under flat deformations.
For this reason we deform only non-complete intersection $\Gamma$.
A curve singularity inherits the complete intersection property from its value semigroup since it is a flat deformation of the corresponding monomial curve (see Proposition~\ref{20}).
The converse fails as shown by a counter-example of Herzog and Kunz (see \cite[p.~40-41]{HK71}).

In case $\Gamma=\ideal{\ell,m,n}$, Herzog (see \cite{Her70}) described minimal relations of the generators $\ell,m,n$.
There are two cases \eqref{H1} and \eqref{H2} (see \S\ref{31}) with $3$ and $2$ minimal relations respectively.
In the non-complete intersection case \eqref{H1} we describe an inverse to Herzog's construction (see Proposition~\ref{7}).
Bresinsky (see \cite{Bre79a}) showed (for arbitrary $\KK$) by an explicit calculation based on Herzog's case \eqref{H1} that any monomial space curve is a complete intersection.
Our results are obtained by lifting his equations to a (flat) deformation with constant value semigroup.
In section \S\ref{26} we construct such deformations (see Proposition~\ref{20}) following an approach using Rees algebras described by Teissier (see \cite[Appendix, Ch.~I, \S1]{Zar06}).
In \S\ref{42} we prove Proposition~\ref{29} by lifting Bresinsky's equations under the given numerical conditions.
In \S\ref{45} we derive Corollary~\ref{44} and give some explicit examples (see Example~\ref{50}).

It is worth mentioning that Bresinsky (see \cite{Bre79a}) showed (for arbitrary $\KK$) that all monomial Gorenstein curves in $4$-space are set-theoretic complete intersections.

\section{Ideals of monomial space curves}\label{31}

Let $\ell,m,n\in\NN$ generate a semigroup $\Gamma=\ideal{\ell,m,n}\subset\NN$.
\[
d=\gcd(\ell,m).
\]
We assume that $\Gamma$ is numerical, that is, $\gcd(\ell,m,n)=1$.

Let $\KK$ be a field and consider the map
\[
\varphi\colon\KK[x,y,z]\to\KK[t],\quad (x,y,z)\mapsto(t^\ell,t^m,t^n),
\]
whose image $\KK[\Gamma]=\KK[t^\ell,t^m,t^n]$ is the semigroup ring of $\Gamma$.

Pick $a,b,c\in\NN$ minimal such that
\[
a\ell=b_1m+c_2n,\quad bm=a_2\ell+c_1n,\quad cn=a_1\ell+b_2m
\]
for some $a_1,a_2,b_1,b_2,c_1,c_2\in\NN$.
Herzog distinguished two cases and proved the following statements (see \cite[Props.~3.3, 3.4, 3.5, Thm.~3.8]{Her70}).

\begin{enumerate}[label={(H\arabic*)}, ref=H\arabic*]

\item\label{H1} $0\notin\set{a_1,a_2,b_1,b_2,c_1,c_2}$.
Then
\begin{equation}\label{1}
a=a_1+a_2,\quad b=b_1+b_2,\quad c=c_1+c_2
\end{equation}
and the unique minimal relations of $\ell,m,n$ read
\begin{align}
\label{R1a}a\ell-b_1m-c_2n &= 0,\\
\label{R1b}-a_2\ell+bm-c_1n &= 0,\\
\label{R1c}-a_1\ell-b_2m+cn &= 0.
\end{align}
Their coefficients form the matrix
\begin{equation}\label{2}
\begin{pmatrix}
a & -b_1 & -c_2 \\
-a_2 & b & -c_1 \\
-a_1 & -b_2 & c
\end{pmatrix}.
\end{equation}
Accordingly the ideal $I=\ideal{f_1,f_2,f_3}$ of maximal minors
\begin{equation}\label{5}
f_1=x^a-y^{b_1}z^{c_2},\quad f_2=y^b-x^{a_2}z^{c_1},\quad f_3=x^{a_1}y^{b_2}-z^c
\end{equation}
of the matrix
\begin{equation}\label{23}
M_0=
\begin{pmatrix}
z^{c_1} & x^{a_1} & y^{b_1}  \\
y^{b_2} & z^{c_2} & x^{a_2}  \\
\end{pmatrix}.
\end{equation}
equals $\ker\varphi$, and the rows of this matrix generate the module of relations between $f_1,f_2,f_3$.
Here $\KK[\Gamma]$ is not a complete intersection.

\item\label{H2} $0\in\set{a_1,a_2,b_1,b_2,c_1,c_2}$. 
One of the relations $(a,-b,0)$, $(a,0,-c)$, or $(0,b,-c)$ is a minimal relation of $\ell,m,n$ and, up to a permutation of the variables, the minimal relations are
\begin{align}
\label{R2a}a\ell &= bm,\\
\label{R2b}a_1\ell+b_2m &= cn.
\end{align}
Their coefficients form the matrix
\begin{equation}\label{4}
\begin{pmatrix}
a & -b & 0 \\
-a_1 & -b_2 & c
\end{pmatrix}.
\end{equation}
It is unique up to adding multiples of the first row to the second.
Overall there are $3$ cases and an overlap case described equivalently by $3$ matrices
\begin{equation}\label{15}
\begin{pmatrix}
a & -b & 0 \\
a & 0 & c
\end{pmatrix},\quad
\begin{pmatrix}
a & -b & 0 \\
0 & -b & c
\end{pmatrix},\quad
\begin{pmatrix}
a & 0 & -c \\
0 & b & -c
\end{pmatrix}.
\end{equation}

Here $\KK[\Gamma]$ is a complete intersection.

\end{enumerate}


In the following we describe the image of Herzog's construction and give a left inverse:
\begin{enumerate}[label={(H\arabic*')}, ref=H\arabic*']

\item\label{GS1} Given $a_1,a_2,b_1,b_2,c_1,c_2\in\NN\setminus\set{0}$, define $a,b,c$ by \eqref{1} and set
\begin{align}
\label{GS1a}\ell' &= b_1c_1+b_1c_2+b_2c_2 = b_1c+b_2c_2 = b_1c_1+bc_2,\\
\label{GS1b}m' &= a_1c_1+a_2c_1+a_2c_2 = ac_1+a_2c_2 = a_1c_1+a_2c,\\
\label{GS1c}n' &= a_1b_1+a_1b_2+a_2b_2 = a_1b+a_2b_2 = a_1b_1+ab_2,
\end{align}
and $e'=\gcd(\ell',m',n')$. 
Note that $\ell',m',n'$ are the submaximal minors of the matrix in \eqref{2}.

\item\label{GS2} Given $a,b,c\in\NN\setminus\set{0}$ and $a_1,b_2\in\NN$, define $\ell',m',n',d'$ by
\begin{align}
\label{GS2a}\ell' &= bd',\\
\label{GS2b}m' &= ad',\\
\label{GS2c}\frac {n'}{d'} &= \frac{a_1b+ab_2}c,\quad \gcd(n',d')=1.
\end{align}
\end{enumerate}


\begin{rmk}
In the overlap case \eqref{15} the formulas \eqref{GS2a}-\eqref{GS2b} yield
\[
(\ell',m',n')=(bc,ac,ab).
\]
\end{rmk}


\begin{lem}\label{3}
In case \eqref{H1}, let $\tilde n\in\NN$ be minimal with $x^{\tilde n}-z^{\tilde\ell}\in I$ for some $\tilde\ell\in\NN$.
Then $\gcd(\tilde\ell,\tilde n)=1$ and $(\tilde n,\tilde\ell)\cdot\gcd(b_1,b_2)=(n',\ell')$.
\end{lem}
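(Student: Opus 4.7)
My plan is to proceed in three short steps: determine $\tilde n,\tilde\ell$ from the kernel map $\varphi$, produce the candidate binomial $x^{n'/g}-z^{\ell'/g}\in I$ by a $y$-elimination of $f_1$ and $f_3$ with $g:=\gcd(b_1,b_2)$, and then match the two.

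For the first step, $x^{\tilde n}-z^{\tilde\ell}\in I=\ker\varphi$ iff $\varphi(x^{\tilde n}-z^{\tilde\ell})=t^{\ell\tilde n}-t^{n\tilde\ell}=0$, iff $\ell\tilde n=n\tilde\ell$. Writing $d:=\gcd(\ell,n)$, the minimal positive solution is $(\tilde n,\tilde\ell)=(n/d,\ell/d)$, which is automatically coprime.

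For the second step, in the domain $\KK[x,y,z]/I$ the relations $f_1=f_3=0$ read $x^a=y^{b_1}z^{c_2}$ and $z^c=x^{a_1}y^{b_2}$. Raising to the powers $b_2/g$ and $b_1/g$ yields $x^{ab_2/g}=y^{b_1b_2/g}z^{c_2b_2/g}$ and $z^{cb_1/g}=x^{a_1b_1/g}y^{b_1b_2/g}$; dividing to cancel the common factor of $y$ gives $x^{(ab_2+a_1b_1)/g}=z^{(cb_1+c_2b_2)/g}$, i.e., $x^{n'/g}=z^{\ell'/g}$, so $x^{n'/g}-z^{\ell'/g}\in I$.

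For the third step, minimality of $\tilde n$ forces $(n'/g,\ell'/g)=k(\tilde n,\tilde\ell)$ for some positive integer $k=e'd/g$ (using $(\ell',n')=e'(\ell,n)$). The easy inequality $d\mid g$ comes from the rearrangements $b_1m=a\ell-c_2n$ and $b_2m=cn-a_1\ell$ of the first and third Herzog relations, together with $\gcd(d,m)=\gcd(\ell,m,n)=1$. The main obstacle is the matching inequality $k=1$, which I would derive from the observation that the linear combination $\beta_2(a\ell-b_1m-c_2n)-\beta_1(-a_1\ell-b_2m+cn)$ with $\beta_i:=b_i/g$ has primitive coefficients $\gcd(\beta_1,\beta_2)=1$ and yields the relation $(n'/g)\ell-(\ell'/g)n=0$; if the three Herzog rows generate the full relation lattice of $(\ell,m,n)$---a consequence of $I=\ideal{f_1,f_2,f_3}$ being the saturated toric ideal of $\Gamma$---then the primitivity of $(\beta_1,\beta_2)$ propagates to $\gcd(n'/g,\ell'/g)=1$, finishing the identification.
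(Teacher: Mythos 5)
Your argument is correct, but it takes a genuinely different route from the paper's. The paper proves the lemma by a single Gr\"obner basis computation: it observes that $f_1,f_2,f_3$ form a Gr\"obner basis for the reverse lexicographic order, reduces $x^{\tilde n}-z^{\tilde\ell}$ to normal form using $r_2$ reductions by $f_1$ and $r_1$ by $f_3$, and reads off the conditions $\tilde\ell=r_1c+r_2c_2$, $b_1r_2=r_1b_2$, $\tilde n=a_1r_1+ar_2$; minimality then forces $r_i=b_i/\gcd(b_1,b_2)$ and the formulas \eqref{GS1a}, \eqref{GS1c} give the claim directly, with no reference to the relation lattice. You instead split the work in three: identifying $(\tilde n,\tilde\ell)$ as the primitive solution of $\ell\tilde n=n\tilde\ell$ via $I=\ker\varphi$ (which also gives the coprimality claim cleanly), exhibiting $x^{n'/g}-z^{\ell'/g}\in I$ by eliminating $y$ between $f_1$ and $f_3$ in the domain $\KK[x,y,z]/I$ (a correct computation, since $n'=a_1b_1+ab_2$ and $\ell'=b_1c+b_2c_2$), and then pinning down the proportionality constant by a lattice argument. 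The load-bearing step is your last one: that the Herzog rows span the full relation lattice $L$ of $(\ell,m,n)$, so that $\beta_2R_1-\beta_1R_3=(n'/g,0,-\ell'/g)$, being primitive in $L$ and $L$ being saturated, is primitive in $\ZZ^3$. This is indeed a consequence of $\ideal{f_1,f_2,f_3}=\ker\varphi$ (a lattice ideal $I_{L'}$ is $\ZZ^3/L'$-graded and contains no monomials, so a binomial $x^{u^+}-x^{u^-}\in I_{L'}$ forces $u\in L'$), so your proof closes, but it imports toric-ideal machinery that the paper's normal-form computation avoids; the paper's route is more elementary and self-contained, while yours is more conceptual and explains \emph{why} $\gcd(b_1,b_2)$ appears. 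Two harmless slips: the parenthetical $(\ell',n')=e'(\ell,n)$ is not justified at this stage (only proportionality is known, with factor $\gcd(\ell',n')/\gcd(\ell,n)$, not necessarily $e'$), but the exact value of $k$ is never used; and the divisibility $d\mid g$ is likewise not needed once you have $\gcd(n'/g,\ell'/g)=1$.
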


\begin{proof}
The first statement holds due to minimality.
By Buchberger's criterion, the generators~\ref{5} form a Gr\"obner basis with respect to the reverse lexicographical ordering on $x,y,z$.
Let $g'$ denote a normal form of $g=x^{\tilde n}-z^{\tilde\ell}$ with respect to \ref{5}.
Then $g\in I$ if and only if $g'=0$.
By \eqref{1}, reductions by $f_2$ can be avoided in the calculation of $g$.
If $r_2$ and $r_1$ many reductions by $f_1$ and $f_3$ respectively are applied, then 
\[
g'=x^{\tilde n-a_1r_1-ar_2}y^{b_1r_2-r_1b_2}z^{r_1c+r_2c_2}-z^{\tilde\ell}
\]
and $g'=0$ is equivalent to
\[
\tilde\ell = r_1c+r_2c_2,\quad
b_1r_2 = r_1b_2,\quad
\tilde n = a_1r_1+ar_2.
\]
Then $r_i=\frac{b_i}{\gcd(b_1,b_2)}$ for $i=1,2$ and the claim follows.
\end{proof}


\begin{lem}\label{6}\
\begin{asparaenum}[(a)]

\item\label{6a} In case \eqref{H1}, equations~\eqref{GS1a}-\eqref{GS1c} recover $\ell,m,n$.

\item\label{6b} In case \eqref{H2}, equations~\eqref{GS2a}-\eqref{GS2c} recover $\ell,m,n,d$.

\end{asparaenum}
\end{lem}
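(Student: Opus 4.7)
The plan is to treat the two cases separately: for \eqref{6b} a direct substitution into \eqref{R2a}--\eqref{R2b}, and for \eqref{6a} a cofactor / null-space analysis of the matrix $M$ in \eqref{2}, sharpened by a gcd argument.

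For \eqref{6b} I would begin from the observation that minimality of $a,b$ in \eqref{R2a} forces $a=m/d$ and $b=\ell/d$ with $d=\gcd(\ell,m)$; equivalently $\ell=bd$ and $m=ad$. Comparing with \eqref{GS2a}--\eqref{GS2b}, the equalities $\ell=\ell'$ and $m=m'$ are thus equivalent to $d=d'$. Substituting $\ell=bd$, $m=ad$ into \eqref{R2b} yields $cn=d(a_1b+ab_2)$, i.e.\ $n/d=(a_1b+ab_2)/c$, which is exactly the identity defining \eqref{GS2c}. Hence $n/d=n'/d'$; since $\gcd(n,d)=\gcd(n,\gcd(\ell,m))=\gcd(\ell,m,n)=1$ and $\gcd(n',d')=1$ by \eqref{GS2c}, both fractions are in lowest terms, which forces $(n,d)=(n',d')$.

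For \eqref{6a} I would first check by direct expansion that $\ell',m',n'$ defined by \eqref{GS1a}--\eqref{GS1c} are the signed $2\times2$ cofactors along the first row of $M$. The three rows of $M$ sum to zero by \eqref{1}, while any two are $\QQ$-linearly independent in case \eqref{H1}: the sign patterns of $(a,-b_1,-c_2)$, $(-a_2,b,-c_1)$ and $(-a_1,-b_2,c)$ are pairwise incompatible with proportionality. Hence $M$ has rank exactly $2$, and the adjugate identity $M\cdot\mathrm{adj}(M)=\det(M)\,I=0$ places $(\ell',m',n')^T$ in the right null space of $M$. This null space is spanned by $(\ell,m,n)^T$ by \eqref{R1a}--\eqref{R1c}, and since $\gcd(\ell,m,n)=1$ the latter is primitive, so $(\ell',m',n')=e'(\ell,m,n)$ with $e'=\gcd(\ell',m',n')$.

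The main remaining step, and what I expect to be the most delicate point, is showing $e'=1$. My plan combines two ingredients. First, the minimality of $a$ in Herzog's construction forces $\gcd(a,b_1,c_2)=1$: any common prime $p$ would give $x^{a/p}-y^{b_1/p}z^{c_2/p}\in I$, contradicting the minimality of $a$ (and similarly $\gcd(a_2,b,c_1)=\gcd(a_1,b_2,c)=1$). Second, Lemma~\ref{3} yields $\gcd(\ell',n')=\gcd(b_1,b_2)$; combined with $(\ell',n')=e'(\ell,n)$ this gives $\gcd(b_1,b_2)=e'\gcd(\ell,n)$, and the symmetric analogues for the variable pairs $(y,z)$ and $(x,y)$ give $\gcd(a_1,a_2)=e'\gcd(m,n)$ and $\gcd(c_1,c_2)=e'\gcd(\ell,m)$. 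Combining these three identities yields $\gcd(a_1,a_2,b_1,b_2,c_1,c_2)=e'\gcd(\ell,m,n)=e'$; but any prime $p$ dividing this joint gcd must also divide $a=a_1+a_2$, hence divides $\gcd(a,b_1,c_2)=1$ by the first ingredient, so $e'=1$ and $(\ell',m',n')=(\ell,m,n)$.
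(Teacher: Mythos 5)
Your proof is correct. Part \eqref{6b} coincides with the paper's argument: minimality in \eqref{R2a} gives $(\ell,m)=d\cdot(b,a)$, substitution into \eqref{R2b} gives $\tfrac nd=\tfrac{n'}{d'}$ in lowest terms, hence $(n,d)=(n',d')$.

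In part \eqref{6a} the skeleton is also the paper's — show $(\ell',m',n')=q\cdot(\ell,m,n)$ for a positive integer $q$, then force $q=1$ using Lemma~\ref{3} together with minimality of $a,b,c$ — but you establish the proportionality by a different route. The paper deduces it from Lemma~\ref{3}: $x^{\tilde n}-z^{\tilde\ell}\in\ker\varphi$ forces $\ell\tilde n=n\tilde\ell$, so $(\ell',n')\propto(\tilde\ell,\tilde n)\propto(\ell,n)$, and then invokes symmetry to get proportionality of the full triples. You instead note that $(\ell',m',n')$ is the vector of signed cofactors along the first row of the matrix $M$ in \eqref{2}, whose rows sum to zero by \eqref{1} and no two of which are proportional (by the sign patterns, using that all six of $a_1,\dots,c_2$ are nonzero in case \eqref{H1}); hence $M$ has rank $2$, $M\cdot\mathrm{adj}(M)=0$ places $(\ell',m',n')$ in the one-dimensional null space spanned by the primitive vector $(\ell,m,n)$, and $q=e'$. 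This is a clean alternative that confines the use of Lemma~\ref{3} to the genuinely arithmetic step $\gcd(b_1,b_2)=e'\gcd(\ell,n)$ and its two symmetric analogues; from these you get $\gcd(a_1,a_2,b_1,b_2,c_1,c_2)=e'\gcd(\ell,m,n)=e'$, and $e'=1$ follows since a common prime would divide $\gcd(a,b_1,c_2)$, which is $1$ by minimality of $a$ — exactly the minimality argument the paper uses, spelled out in slightly more detail. Both routes are sound; yours makes the linear-algebra content explicit, the paper's stays entirely within the valuation-theoretic framework of Lemma~\ref{3}.
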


\begin{proof}\
\begin{asparaenum}[(a)]

\item Consider $\tilde n,\tilde\ell\in\NN$ as in Lemma~\ref{3}.
Then $x^{\tilde n}-z^{\tilde\ell}\in I=\ker\varphi$ means that $(t^{\ell})^{\tilde n}=(t^{n})^{\tilde\ell}$ and hence $\ell\tilde n=\tilde\ell n$.
So the pair $(\ell,n)$ is proportional to $(\tilde\ell,\tilde n)$ which in turn is proportional to $(\ell',n')$ by Lemma~\ref{3}.
Then the two triples $(\ell,m,n)$ and $(\ell',m',n')$ are proportional by symmetry. 
Since $\gcd(\ell,m,n)=1$ by hypothesis $(\ell',m',n')=q\cdot(\ell,m,n)$ for some $q\in\NN$. 
By Lemma~\ref{3}, $q$ divides $\gcd(b_1,b_2)$ and by symmetry also $\gcd(a_1,a_2)$ and $\gcd(c_1,c_2)$. 
By minimality of the relations~\eqref{R1a}-\eqref{R1c}, $\gcd(a_1,a_2,b_1,b_2,c_1,c_2)=1$ and hence $q=1$.
The claim follows.

\item By the minimal relation~\eqref{R2a}, $\gcd(a,b)=1$ and hence $(\ell,m)=d\cdot(b,a)$.
Substitution into equation~\eqref{R2b} and comparison with \eqref{GS2c} gives $\frac nd=\frac{a_1b+ab_2}c=\frac {n'}{d'}$ with $\gcd(n,d)=\gcd(\ell,m,n)=1$ by hypothesis.
We deduce that $(n,d)=(n',d')$ and then $(\ell,m)=(\ell',m')$.\qedhere

\end{asparaenum}
\end{proof}


\begin{prp}\label{7}\
\begin{asparaenum}[(a)]

\item\label{7a} In case \eqref{GS1}, $a_1,a_2,b_1,b_2,c_1,c_2$ arise through \eqref{H1} from some numerical semigroup $\Gamma=\ideal{\ell,m,n}$ if and only if $e'=1$.
In this case, $(\ell,m,n)=(\ell',m',n')$.

\item\label{7b} In case \eqref{GS2}, $a,b,c,a_1,b_2$ arise through \eqref{H2} from some numerical semigroup $\Gamma=\ideal{\ell,m,n}$ if and only if $(\ell',m',n')$ is in the corresponding subcase of \eqref{H2},
\begin{gather}
\label{16a}\gcd(a,b)=1,\\
\label{16b}\forall q\in[-b_2/b,a_1/a]\cap\NN\colon\gcd(-a_1+qa,-b_2-qb,c)=1.
\end{gather}
In this case, $(\ell,m,n)=(\ell',m',n')$.

\end{asparaenum}
\end{prp}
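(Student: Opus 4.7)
The plan is to prove both parts by inverting Herzog's construction: Lemma~\ref{6} yields the forward implications, while the converses require verifying the Herzog-style relations hold, that they are primitive, and that they span the full lattice of relations.

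For $(\Rightarrow)$ in both parts, Lemma~\ref{6} gives $(\ell,m,n)=(\ell',m',n')$, so the numerical hypothesis $\gcd(\ell,m,n)=1$ yields $e'=1$ in (a). For (b) it gives $\gcd(\ell',m',n')=1$; condition \eqref{16a} is the primitivity $\gcd(a,b)=1$ of the first minimal relation $a\ell=bm$; and \eqref{16b} expresses primitivity of each equivalent form of the second minimal relation $a_1\ell+b_2m=cn$. The subcase assertion reads off the pattern of zeros among $a_1,b_2$.

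For $(\Leftarrow)$ in (a), set $\Gamma=\ideal{\ell',m',n'}$, numerical by $e'=1$. A direct substitution using $a=a_1+a_2$ etc.\ and the three equivalent forms in \eqref{GS1a}-\eqref{GS1c} shows that each row of \eqref{2} annihilates $(\ell',m',n')$. The three rows sum to zero by \eqref{1} and any two are linearly independent; the $2\times 2$ minors of the $2\times 3$ matrix formed by any two of them equal $\pm\ell',\pm m',\pm n'$, so the sublattice they span has index $\gcd(\ell',m',n')=1$ in the full relation lattice of $(\ell',m',n')$ and coincides with it. Each row is also primitive, since for instance $\gcd(a,b_1,c_2)$ divides each of $\ell'=b_1c+b_2c_2$, $m'=ac_1+a_2c_2$, $n'=a_1b_1+ab_2$, hence divides $\gcd(\ell',m',n')=1$. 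Herzog's uniqueness in case~\eqref{H1} combined with the matching sign pattern of \eqref{2} then identifies our triple as the Herzog minimal relations of $\Gamma$.

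For $(\Leftarrow)$ in (b), $\gcd(\ell',m',n')=1$ follows from \eqref{16a}, \eqref{GS2a}-\eqref{GS2b}, and \eqref{GS2c}: the first two give $\gcd(\ell',m')=d'$, and \eqref{GS2c} forces $\gcd(d',n')=1$. Direct substitution verifies $(a,-b,0)$ and $(-a_1,-b_2,c)$ as relations, and \eqref{16a} makes $(a,-b,0)$ primitive so that $a,b$ are Herzog-minimal. For minimality of $c$: the $2\times 2$ minors of $\bigl(\begin{smallmatrix}a&-b&0\\-a_1&-b_2&c\end{smallmatrix}\bigr)$ simplify via \eqref{16a} to have $\gcd$ equal to $\gcd(c,a_1b+ab_2)$; the identity $a(b_2+qb)+b(a_1-qa)=a_1b+ab_2$ combined with \eqref{16b} forces this gcd to be $1$, so our two relations span the full relation lattice of $(\ell',m',n')$. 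Any relation $(\tilde a_1,\tilde b_2,-\tilde c)$ is then an integer combination of them, and its third component forces $c\mid\tilde c$, hence $\tilde c\geq c$. The corresponding-subcase hypothesis selects among the three forms of \eqref{15} and excludes case~\eqref{H1}.

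The main obstacle is the step in (b) from \eqref{16b} to $\gcd(c,a_1b+ab_2)=1$. Given a prime $p$ dividing this gcd, \eqref{16a} guarantees $p$ is coprime to at least one of $a,b$, so $qa\equiv a_1\pmod{p}$ (or $qb\equiv -b_2\pmod{p}$) is solvable; one must then argue that the admissible range $[-b_2/b,a_1/a]\cap\NN$, of length $(a_1b+ab_2)/(ab)$, contains a suitable residue class mod $p$. This careful range analysis is the most delicate part of the proof.
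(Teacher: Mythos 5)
There are genuine gaps in both directions of your converse arguments, and in part (b) an intermediate claim is actually false.

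In part (a), your reduction ends with the sentence that ``Herzog's uniqueness in case \eqref{H1} combined with the matching sign pattern of \eqref{2} identifies our triple as the Herzog minimal relations.'' But Herzog's uniqueness statement concerns the \emph{minimal} relations, i.e.\ those realizing the minimal $a,b,c$; you have only shown that the rows of \eqref{2} are relations with the right sign pattern, summing to zero and generating the full relation lattice. From this it does not follow formally that $a$ is the least positive integer with $a\ell'\in\ideal{m',n'}$ (nor even that $(\ell',m',n')$ is in case \eqref{H1} rather than \eqref{H2}); establishing exactly these two points is the entire content of the paper's proof, done there by comparing \eqref{2} with a hypothetical minimal matrix \eqref{8} or \eqref{9} and deriving contradictions from the explicit formulas \eqref{GS1a}--\eqref{GS1c}. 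Your lattice-index observation is a nice alternative starting point (one can in fact finish from it: writing an arbitrary relation $(a^0,-\beta_1,-\gamma_2)$ with $a^0>0$, $\beta_1,\gamma_2\ge0$ as $\alpha v_1+\beta v_2$ in the rows of \eqref{2} and analyzing signs forces $a^0\ge a$ with equality only for $v_1$, which yields both minimality and case \eqref{H1}), but that argument is missing from your write-up, and the step as stated is not a valid application of Herzog's theorem.

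In part (b), the step you yourself flag as delicate is not merely delicate --- it is false. Take $a=2$, $b=3$, $a_1=1$, $b_2=6$, $c=10$. Then \eqref{16a} holds, the range in \eqref{16b} is $[-2,1/2]\cap\NN=\set{0}$ and $\gcd(-1,-6,10)=1$, so \eqref{16b} holds; yet $\gcd(c,a_1b+ab_2)=\gcd(10,15)=5$. Here $(\ell',m',n')=(6,4,3)$ and the two relations $(2,-3,0)$, $(-1,-6,10)$ span a sublattice of index $5$, so your claimed conclusion that \eqref{16a} and \eqref{16b} force the two given rows to generate the full relation lattice is wrong. (The proposition survives because $(2,-3,0)$ is not a minimal relation of $(6,4,3)$, so the ``corresponding subcase'' hypothesis fails --- but you only invoke that hypothesis at the very end to sort the subcases, whereas it must carry the main load.) The promised ``range analysis'' cannot be completed: the interval $[-b_2/b,a_1/a]\cap\NN$ may consist of a single point and need not meet the offending residue class modulo $p$. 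The paper's route is different and unavoidable here: use the subcase hypothesis to obtain the minimal relation matrix \eqref{8}, deduce $a=a'$ and $b=b'$ from $\gcd(\ell',m')=d'$, then write the second row of \eqref{4} as $p\cdot(-a_1',-b_2',c')$ plus a multiple of the first row and use \eqref{16b} only to force $p=1$.
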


\begin{proof}\
\begin{asparaenum}[(a)]

\item By Lemma~\ref{6}.\eqref{6a}, $e'=1$ is a necessary condition. 
Conversely let $e'=1$.
By definition, \eqref{2} is a matrix of relations of $(\ell',m',n')$.
Assume that $(\ell',m',n')$ is in case \eqref{H2}.
By symmetry, we may assume that $(\ell',m',n')$ admits a matrix of minimal relations
\begin{equation}\label{8}
\begin{pmatrix}
a' & -b' & 0 \\
-a'_1 & -b'_2 & c'
\end{pmatrix}
\end{equation}
of type \eqref{4}.
By choice of $a',b',c'$, it follows that
\[
a>a',\quad b>b',\quad c\geq c'.
\] 
By Lemma~\ref{6}.\eqref{6b}, $d'$ is the denominator of $\frac{a'_1b'+a'b'_2}{c'}$ and
\[
\ell'=b'd'.
\]
In particular $c'\geq d'$.
Then $b_1\geq b'$ contradicts \eqref{GS1a} since
\[
\ell'=b_1c+b_2c_2\geq b'c'+b_2c_2>b'c'\geq b'd'=\ell'.
\]
We may thus assume that $b_1<b'$.
The difference of first rows of \eqref{8} and \eqref{2} is then a relation
\[
\begin{pmatrix}a'-a & b_1-b' & c_2\end{pmatrix}
\]
of $(\ell',m',n')$ with $a'-a<0$, $b_1-b'<0$ and $c_2>0$.
Then $c_2\geq c'\ge d'$ by choice of $c'$.
This contradicts \eqref{GS1a} since
\[
\ell'=b_1c_1+bc_2\geq b_1c_1+b'd'>b'd'=\ell'.
\]
We may thus assume that $(\ell',m',n')$ is in case \eqref{H1} with a matrix of unique minimal relations 
\begin{equation}\label{9}
\begin{pmatrix}
a' & -b'_1 & -c'_2 \\
-a'_2 & b' & -c'_1 \\
-a'_1 & -b'_2 & c'
\end{pmatrix}
\end{equation}
of type \eqref{2} where 
\[
a'=a'_1+a'_2, \quad b'=b'_1+b'_2, \quad c'=c'_1+c'_2.
\]
as in \eqref{1}.
Then $(a,b,c)\ge(a',b',c')$ by choice of the latter and
\[
\ell'=b'_1c'+b'_2c'_2=b'_1c'_1+b'c'_2
\]
by Lemma~\ref{6}.\eqref{6a}.
If $(a_i,b_i,c_i)\geq(a'_i,b'_i,c'_i)$ for $i=1,2$, then
\[
\ell'=b_1c+b_2c_2\geq b'_1c'+b'_2c'_2=\ell'
\]
implies $c=c'$ and hence $(a,b,c)=(a',b',c')$ by symmetry.
By uniqueness of \eqref{9} then, $(a_1,a_2,b_1,b_2,c_1,c_2)=(a_1',a_2',b_1',b_2',c_1',c_2')$ and hence the claim.
By symmetry, it remains to exclude the case $c'_2>c_2$.
The difference of first rows of \eqref{9} and \eqref{2} is then a relation
\[
\begin{pmatrix}a'-a & b_1-b'_1 & c_2-c'_2\end{pmatrix}
\]
of $(\ell',m',n')$ with $a'-a\le0$, $c_2-c'_2<0$ and hence $b_1-b'_1\ge b'$ by choice of the latter.
This leads to the contradiction
\[
\ell'=b_2c_2+b_1c>b_1c\geq b'c'+b'_1c'>b_2'c_2'+b'_1c'=\ell'.
\]

\item By Lemma~\ref{6}.\eqref{6b}, the conditions are necessary.
Conversely assume that the conditions hold true.
By definition, \eqref{4} is a matrix of relations of $(\ell',m',n')$.
By hypothesis, \eqref{8} is a matrix of minimal relations of $(\ell',m',n')$.
By \eqref{16a}, $\gcd(\ell',m')=d'$ and hence by Lemma~\ref{6}.\eqref{6b}
\[
b=\frac{\ell'}{d'}=b',\quad a=\frac{m'}{d'}=a'.
\]
Writing the second row of \eqref{4} as a linear combination of \eqref{8} yields
\[
\begin{pmatrix}-a_1+qa & -b_2-qb & c\end{pmatrix}=
p\begin{pmatrix}-a'_1 & -b'_2 & c'\end{pmatrix}
\]
with $p\in\NN$ and $q\cap[-b_2/b,a_1/a]\cap\NN$ and hence $p=1$ by \eqref{16b}.
The claim follows.\qedhere

\end{asparaenum}
\end{proof}


The following examples show some issues that prevent us from formulating stronger statement in Proposition~\ref{7}.\eqref{7b}.

\begin{exa}\
\begin{asparaenum}[(a)]

\item Take $(a,-b,0)=(3,-2,0)$ and $(-a_1,-b_2,c)=(-1,-4,4)$.
Then $(\ell',m',n')=(4,6,7)$ which is in case \eqref{H2}.
The second minimal relation is $(-2,-1,2)=\frac12((-a_1,-b_2,c)-(a,-b,0))$.
The same $(\ell',m',n')$ is obtained from $(a,0,-c)=(7,0,-4)$ and $(-a_2,b,-c_1)= (-1,3,-2)$.
This latter satisfies \eqref{16a} and \eqref{16b}, but $(a,0,-c)$ is not minimal.

\item Take $(a,-b,0)=(4,-3,0)$ and $(-a_1,-b_2,c)=(-2,-1,2)$.
Then $(\ell',m',n')=(3,4,5)$, but $(a,-b,0)$ is not a minimal relation.
In fact the corresponding complete intersection $\KK[\Gamma]$ defined by the ideal $\ideal{x^3-y^4,z^2-x^2y}$ is the union of two branches $x=t^3,y=t^4,z=\pm t^5$.

\end{asparaenum}
\end{exa}

\section{Deformation with constant semigroup}\label{26}

Let $\O=(\O,\mm)$ be a local $\KK$-algebra with $\O/\mm\cong\KK$.
Let $F_\bullet=\set{F_i\mid i\in\ZZ}$ be a decreasing filtration by ideals such that $F_i=\O$ for all $i\le0$ and $F_1\subset\mm$.
Consider the Rees ring
\[
A=\bigoplus_{i\in\ZZ}F_is^{-i}\subset\O[s^{\pm1}].
\]
It is a finite type graded $\O[s]$-algebra and flat (torsion free) $\KK[s]$-algebra with retraction
\[
A\onto A/A\cap\mm[s^{\pm1}]\cong\KK[s].
\]
For $u\in\O^*$ there are isomorphisms
\begin{equation}\label{24}
A/(s-u)A\cong\O,\quad A/sA\cong\gr^F\O.
\end{equation}
Geometrically $A$ defines a flat morphism with section
\[
\xymatrix{
\Spec(A)\ar[r]^-\pi & \AA^1_\KK\ar@/^1pc/[l]^-\iota
}
\]
with fibers over $\KK$-valued points 
\begin{align*}
\pi^{-1}(x)&\cong\Spec(\O),\quad\iota(x)=\mm,\quad 0\ne x\in\AA^1_\KK,\\
\pi^{-1}(0)&\cong\Spec(\gr^F\O),\quad\iota(0)=\gr^F\mm.
\end{align*}


Let $\KK$ be an algebraically closed complete non-discretely valued field.
Let $C$ be an irreducible $\KK$-analytic curve germ.
Its ring $\O=\O_C$ is a one-dimensional $\KK$-analytic domain.
Denote by $\Gamma'$ its value semigroup.
Pick a representative $W$ such that $C=(W,w)$.
We allow to shrink $W$ suitably without explicit mention.
Let $\ol \O_W$ be the normalization of $\O_W$.
Then
\[
\xymatrix{
\ol\O_{W,w}=(\ol\O,\ol\mm)\cong(\KK\set{t'},\ideal{t'})\ar[r]^-\upsilon & \NN\cup\set\infty
}
\]
is a discrete valuation ring.
Denote by $\mm_W$ and $\ol\mm_W$ the ideal sheaves corresponding to $\mm$ and $\ol\mm$.
There are decreasing filtrations by ideal (sheaves)
\[
\F_\bullet=\ol\mm_W^\bullet\lhd\ol\O_W,\quad
F_\bullet=\F_{\bullet,w}=\ol\mm^\bullet=\upsilon^{-1}[\bullet,\infty]\lhd\ol\O.
\]
Setting $t=t'/s$ and identifying $\KK\cong\ol\O_W/\ol\mm_W$ this yields a finite extension of finite type graded $\O_W$- and flat (torsion free) $\KK[s]$-algebras
\begin{equation}\label{22}
\A=\bigoplus_{i\in\ZZ}(\F_i\cap\O_W)s^{-i}\subset\bigoplus_{i\in\ZZ}\F_is^{-i}=\ol\O_W[s,t]=\B\subset\ol\O_W[s^{\pm1}]
\end{equation}
with retraction defined by $\KK[s]\cong\B/(\B_{<0}+\B\ol\mm_W)$.
The stalk at $w$ is
\[
A=\A_w=\bigoplus_{i\in\ZZ}(F_i\cap\O)s^{-i}\subset\bigoplus_{i\in\ZZ}F_is^{-i}=\ol\O[s,t]=B\subset\ol\O[s^{\pm1}].
\]
At $w\ne w'\in W$ the filtration $\F_{w'}$ is trivial and the stalk becomes $\A_{w'}=\O_{W,w'}[s^\pm1]$.
The graded sheaves $\gr^\F\O_W\subset\gr^\F\ol\O_W$ are thus supported at $w$ and the isomorphism 
\[
\gr^\F(\ol\O_W)_w=\gr^F\ol\O\cong\KK[t']\cong\KK[\NN]
\]
identifies 
\begin{equation}\label{19}
(\gr^\F\O_W)_w=\gr^F\O\cong\KK[\Gamma'],\quad\Gamma'=\upsilon(\O\setminus\set{0})
\end{equation}
with the semigroup ring $\KK[\Gamma']$ of $\O$.


The analytic spectrum $\Spec^\an_W(-)\to W$ applied to finite type $\O_W$-algebras represents the functor $T\mapsto\Hom_{\O_T}(-_T,\O_T)$ from $\KK$-analytic spaces over $W$ to sets (see \cite[Exp.~19]{Car62}).
Note that
\[
\Spec^\an_W(\KK[s])=\Spec^\an_\set{w}(\KK[s])=L
\]
is the $\KK$-analytic line.
The normalization of $W$ is 
\[
\nu\colon\ol W=\Spec^\an_W(\ol\O_W)\to W
\]
and $\B=\nu_*\ol\B$ where $\ol\B=\O_{\ol W}[s,t]$.
Applying $\Spec^\an_W$ to \eqref{22} yields a diagram of $\KK$-analytic spaces (see \cite[Appendix]{Zar06})
\begin{equation}\label{41}
\xymatrix{
X=\Spec^\an_W(\A)\ar[dr]^-\pi && \Spec^\an_W(\B)=Y\ar[ll]_-\rho\\
&L\ar[ur]^-\iota
}
\end{equation}
where $\pi$ is flat with $\pi\circ\rho\circ\iota=\id$ and
\begin{align*}
\pi^{-1}(x)&\cong\Spec^\an_W(\O_W)=W,\quad\iota(x)=w,\quad 0\ne x\in L,\\
\pi^{-1}(0)&\cong\Spec^\an_W(\gr^\F\O_W),\quad\iota(0)\leftrightarrow\gr^\F\mm_W.
\end{align*}


\begin{rmk} 
Teissier defines $X$ as the analytic spectrum of $\A$ over $W\times L$ (see \cite[Appendix, Ch.~I, \S1]{Zar06}).
This requires to interpret the $\O_W$-algebra $\A$ as an $\O_{W\times L}$-algebra.
\end{rmk}


\begin{rmk}
In order to describe \eqref{41} in explicit terms, embed
\[
\xymatrix{
L\supset\ol W\ar[r]^\nu & W\subset L^n
}
\]
with coordinates $t'$ and $\ul x=x_1,\dots,x_n$ and
\begin{align*}
X&=
\ol{\set{(\ul x,s)\xmid (s^{\ell_1}x_1,\dots,s^{\ell_n}x_n)\in W, s\ne 0}}\subset L^n\times L,\\
Y&=\set{(t,s)\xmid t'=st\in\ol W}\cup L\times\set{0}\subset L\times L.
\end{align*}
This yields the maps $X\to W\gets Y$.
The map $\rho$ in \eqref{41} becomes
\[
\rho(t,s)=(x_1(t')/s^{\ell_1},\dots,x_n(t')/s^{\ell_n})
\]
for $s\ne 0$ and the fiber $\pi^{-1}(0)$ is the image of the map 
\[
\rho(t,0)=((\xi_1(t),\dots,\xi_n(t)),0),\quad 
\xi_k(t)=\lim_{s\to 0}x_k(st)/s^{\ell_k}=\sigma(x_k)(t).
\] 
\end{rmk}


Taking germs in \eqref{41} this yields the following.

\begin{prp}\label{20}\pushQED{\qed}
There is a flat morphism with section
\[
\xymatrix{
S=(X,\iota(0))\ar[r]^-\pi & (L,0)\ar@/^1pc/[l]^-\iota
}
\]
with fibers 
\begin{align*}
\pi^{-1}(x)&\cong(W,w)=C,\quad\iota(x)=w,\quad 0\ne x\in L,\\
\pi^{-1}(0)&\cong\Spec^\an(\KK[\Gamma'])=C_0,\quad\iota(0)\leftrightarrow\KK[\Gamma'_+].\qedhere
\end{align*}
\end{prp}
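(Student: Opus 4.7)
The plan is to deduce this proposition by taking germs at $\iota(0)\in X$ in the already-constructed diagram~\eqref{41}. All of the essential algebraic content has been packaged into the isomorphisms~\eqref{24} and the identification~\eqref{19}, so what remains is to verify that these descend correctly to germs and that both flatness and the section are preserved.

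First I would note that $\pi\colon X\to L$ in~\eqref{41} is flat, because $\A\subset\ol\O_W[s^{\pm1}]$ is torsion free, hence flat, over $\KK[s]$. Flatness is preserved on passage to the germ at any point, so the induced $\pi\colon S\to(L,0)$ is flat. The section $\rho\circ\iota$ of $\pi$ already present in~\eqref{41} restricts to a section on germs, with $\iota(0)\in S$ as the distinguished base point (here $\iota$ in the statement is the abuse of notation for the composition $\rho\circ\iota$).

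Next I would identify the fibers using~\eqref{24}. For $0\ne x\in(L,0)$ corresponding to a unit $u\in\O^*$, the first isomorphism $A/(s-u)A\cong\O$ shows that $\pi^{-1}(x)\cong(W,w)=C$ with $\iota(x)=w$, recovering the generic fiber. For $x=0$, the second isomorphism $A/sA\cong\gr^F\O$ combined with~\eqref{19} gives $A/sA\cong\KK[\Gamma']$, so the germ of the analytic spectrum at $\iota(0)$ is $\Spec^{\an}(\KK[\Gamma'])=C_0$, with the marked point corresponding to the augmentation ideal $\KK[\Gamma'_+]$.

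The only delicate point is the interplay between the analytic-spectrum construction and the algebraic operations of base change and localization: one needs $\Spec^{\an}_W(-)$ to commute with base change along $\KK[s]\to\KK[s]/(s-u)$ and $\KK[s]\to\KK[s]/(s)$, and with germ formation at $\iota(0)$. Both compatibilities are standard consequences of the representability of $\Spec^{\an}_W$ recalled before~\eqref{41}, so I do not anticipate a genuine obstacle; the proposition is essentially the geometric repackaging of the Rees construction of~\S\ref{26}.
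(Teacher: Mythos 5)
Your proposal is correct and follows essentially the same route as the paper: the paper's entire proof is the single sentence ``Taking germs in \eqref{41} this yields the following,'' relying exactly as you do on the flatness of $\A$ over $\KK[s]$ (torsion freeness), the section $\rho\circ\iota$ from \eqref{41}, and the fiber identifications \eqref{24} and \eqref{19}. You merely spell out the compatibilities that the paper leaves implicit.
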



The structure morphism factorizes through a flat morphism
\[
\xymatrix{
X=\Spec^\an_W(\A)\ar@/_1pc/[rr]_-f\ar[r]^-{\hat f} & (|W|,\A)\ar[r] & W
}
\]
and $\hat f^\#_{\iota(0)}\colon A\to\O_{X,\iota(0)}$ induces an isomorphism of completions (see \cite[Exp.~19, \S2, Prop.~4]{Car62})
\[
\wh{A_{\iota(0)}}\cong\wh{\O_{X,\iota(0)}}.
\]
This yields the finite extension of $\KK$-analytic domains
\[
\O_S=\O_{X,\iota(0)}\subset\O_{Y,\iota(0)}.
\]
We aim to describe $\O_{Y,\iota(0)}$ and $\KK$-analytic algebra generators of $\O_S$. 
In explicit terms $\O_S$ is obtained from a presentation
\[
I\to\O[\ul x]\to A\to 0
\]
mapping $\ul x=x_1,\dots,x_n$ to $\iota(0)=A\cap\mm[s^{\pm1}]+As$ as
\begin{equation}\label{17}
\O_S=\O\set{\ul x}/\O\set{\ul x}I=\O\set{\ul x}\otimes_{\O[\ul x]}A,\quad\O\set{\ul x}=\O\wh\otimes\KK\set{\ul x}.
\end{equation}
Any $\O_W$-module $\M$ gives rise to an $\O_X$-module
\[
\wt\M=\O_X\otimes_{f^*\A}f^*\M=\hat f^*\M.
\]
With $M=\M_w$, its stalk at $\iota(0)$ becomes
\[
\wt M=\O_S\otimes_AM.
\]


\begin{lem}\label{30}
$\Spec^\an_W(\B)=\Spec^\an_{\ol W}(\ol\B)$ and hence $\O_{Y,\iota(0)}=\KK\set{s,t}$.
\end{lem}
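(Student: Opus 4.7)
The plan is to split the lemma into two largely independent parts: a formal identification of relative analytic spectra, and an explicit local computation of the stalk at $\iota(0)$.

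First, I would deduce $\Spec^\an_W(\B)=\Spec^\an_{\ol W}(\ol\B)$ from the general principle that forming the relative analytic spectrum commutes with push-forward along a finite morphism. Since $\nu\colon\ol W\to W$ is finite and $\B=\nu_*\ol\B$ by construction, applying this principle to $\ol\B$ gives the desired equality. The underlying fact is a direct consequence of the universal property of $\Spec^\an$ recalled from \cite[Exp.~19]{Car62}: a $W$-morphism $T\to\Spec^\an_W(\nu_*\ol\B)$ corresponds to an $\O_T$-algebra map out of the pullback of $\nu_*\ol\B$, and finiteness of $\nu$ forces such a map to factor through a unique $\ol W$-morphism $T\to\Spec^\an_{\ol W}(\ol\B)$. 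The authors use the same compatibility implicitly to identify $\Spec^\an_W(\KK[s])$ with $\Spec^\an_{\set{w}}(\KK[s])=L$.

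For the second part, let $w'\in\ol W$ be the unique preimage of $w$ under $\nu$, so that $\O_{\ol W,w'}\cong\KK\set{t'}$. Viewing $\ol\B$ as the $\O_{\ol W}$-subalgebra of $\O_{\ol W}[s^{\pm1}]$ generated by $s$ and $t=t'/s$, I would check (by computing normal forms modulo $st-t'$) that this subalgebra is presented by
\[
\ol\B_{w'}\cong\KK\set{t'}[s,t]/(st-t').
\]
The section $\iota$ is induced by the retraction $\B\onto\B/(\B_{<0}+\B\ol\mm_W)\cong\KK[s]$, whose kernel kills both $t$ and $t'$, so $\iota(0)$ corresponds to the maximal ideal $(s,t,t')$ of $\ol\B_{w'}$. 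Passing to the analytic localization of this presentation yields
\[
\O_{Y,\iota(0)}\cong\KK\set{t',s,t}/(st-t'),
\]
and the Weierstrass-type elimination $t'\mapsto st$ identifies this with $\KK\set{s,t}$.

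The only delicate step I expect is the first one: invoking commutation of $\Spec^\an$ with finite direct image relies on a categorical argument not spelled out in the paper, even though it is used implicitly elsewhere. Everything in the second part is mechanical once the local presentation of $\ol\B$ is in hand, so I do not anticipate any obstacle there.
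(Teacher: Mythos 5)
Your proposal is correct and follows essentially the same route as the paper: the identification $\Spec^\an_W(\B)=\Spec^\an_{\ol W}(\ol\B)$ rests on exactly the two ingredients the paper uses, namely that finiteness of $\nu$ gives $\ol\B=\O_{\ol W}\otimes_{\nu^*\ol\O_W}\nu^*\B$ (Cartan, Exp.~19, \S3, Prop.~9) together with the compatibility of $\Spec^\an$ with such tensor/fibre products --- the paper simply writes out the resulting chain of fibre-product identities explicitly instead of phrasing the argument via the functor of points. Your explicit stalk computation $\O_{Y,\iota(0)}\cong\KK\set{t',s,t}/(st-t')\cong\KK\set{s,t}$ correctly supplies the ``hence,'' which the paper leaves to the reader.
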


\begin{proof}
By finiteness of $\nu$ (see \cite[Exp.~19, \S3, Prop.~9]{Car62}),
\[
\ol\B=\wt{\nu_*\ol\B}=\wt\B=\O_{\ol W}\otimes_{\nu^*\ol\O_W}\nu^*\B.
\]
By the universal property of $\Spec^\an$, it follows that (see \cite[Thm.~2.2.5.(2)]{Con06})
\begin{align*}
\Spec^\an_{\ol W}(\ol\B)
&=\Spec^\an_{\ol W}(\O_{\ol W}\otimes_{\nu^*\ol\O_W}\nu^*\B)\\
&=\Spec^\an_{\ol W}(\O_{\ol W})\times_{\Spec^\an_{\ol W}(\nu^*\ol\O_W)}\Spec^\an_{\ol W}(\nu^*\B)\nonumber\\
&=\ol W\times_{\ol W\times_W\ol W}(\Spec^\an_W(\B)\times_W\ol W)\nonumber\\
&=\ol W\times_{\ol W}\Spec^\an_W(\B)\nonumber\\
&=\Spec^\an_W(\B).\qedhere\nonumber
\end{align*}
\end{proof}


For $\xi'=\sum_{i\in\NN}\xi_i{t'}^i\in\KK[t']$ with $\ell=\upsilon(\xi')$ denote
\begin{equation}\label{33}
\xi=\xi'/s^\ell=\sum_{i\ge\ell}\xi_it^is^{i-\ell}\in F_\ell s^{-\ell}=B_\ell.
\end{equation}


\begin{lem}\label{10}
Consider $\ul\xi'=\xi'_1,\dots,\xi'_n\in\mm\cap\KK[t']$, define $\ul\xi$ by \eqref{33} and $\ul\ell$ by $\ell_i=\upsilon(\xi'_i)$ for $i=1,\dots,n$.
If $\Gamma'=\ideal{\ul\ell}$, then $\O=\KK\set{\ul\xi'}$ and $\O_S=\KK\set{\ul\xi,s}$.
\end{lem}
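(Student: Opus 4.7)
The plan is to establish the two equalities in sequence, using the first as a key ingredient for the second.

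For $\O=\KK\set{\ul\xi'}$, I will compare the subring $R:=\KK\set{\ul\xi'}\subseteq\O$ with $\O$ inside their common normalization $\ol\O=\KK\set{t'}$. Since the value semigroup of an irreducible $\KK$-analytic curve germ is numerical, $\gcd(\ul\ell)=1$, hence $t'$ lies in the fraction field of $R$ and $\ol\O$ is finite over $R$; in particular $\O/R$ is a finitely generated $R$-module. The hypothesis $\Gamma'=\ideal{\ul\ell}$ forces the value semigroup of $R$ to coincide with $\Gamma'$ (it contains $\ul\ell$ and lies inside $\Gamma'$), equivalently the graded map $\gr^F R\onto\gr^F\O\cong\KK[\Gamma']$ of \eqref{19} is surjective. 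A filtered approximation argument then gives $R=\O$: for $f\in\O$ with $\upsilon(f)=k$, pick $r\in R$ with $\upsilon(r)=k$ and $c\in\KK$ so that $\upsilon(f-cr)>k$; iterating produces a series in $R$ whose remainders have valuations tending to infinity, hence converging analytically, and Nakayama applied to the finite $R$-module $\O/R$ concludes $f\in R$.

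For $\O_S=\KK\set{\ul\xi,s}$, the inclusion $\supseteq$ is by construction. For the converse, I will use the presentation $\O_S=\O\set{\ul x}/\O\set{\ul x}I$ from \eqref{17} with $\ul x$ mapping to a set of generators of $\iota(0)=A\cap\mm[s^{\pm1}]+As$. The first equality yields $\mm=\ideal{\ul\xi'}$, and since $\xi'_j=s^{\ell_j}\xi_j\in\ideal{s,\ul\xi}$, the pair $(\ul\xi,s)$ already generates $\iota(0)$, so I take $\ul x=(\ul\xi,s)$. Lemma~\ref{30} gives an injection $\O_S\into\O_{Y,\iota(0)}=\KK\set{s,t}$ induced by $t'=st$, under which $\O=\KK\set{\ul\xi'}$ embeds into $\KK\set{\ul\xi,s}$ via $\xi'_j\mapsto s^{\ell_j}\xi_j$. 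The composition $\O\set{\ul x,y}\onto\O_S\into\KK\set{s,t}$ therefore has image exactly $\KK\set{\ul\xi,s}$, which by injectivity of the second arrow forces $\O_S=\KK\set{\ul\xi,s}$ as subrings of $\KK\set{s,t}$.

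The main obstacle is the filtered approximation step in the first equality: one must verify that the iteratively constructed series converges analytically in $R$ and that its limit actually lies in $R$ (not merely in $\ol\O$), for which the strictly increasing remainder valuations together with the finiteness of $\O/R$ are essential. Once the first equality is secured, the second reduces to tracing generators through the presentation \eqref{17} and the diagram of Lemma~\ref{30}.
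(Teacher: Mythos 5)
Your first equality is essentially correct, and your route differs from the paper's: the paper passes from generation of $\gr^F\O\cong\KK[\Gamma']$ by the symbols $\sigma(\ul\xi')$ to generation of $\mm/\mm^2$, then applies Nakayama and the analytic inverse function theorem, whereas you compare $R=\KK\set{\ul\xi'}$ with $\O$ valuation by valuation inside $\ol\O$. One caution: drop the ``series converging analytically'' framing, since $R$ is not $\mm_R$-adically complete and $t'$-adic decay of the remainders does not by itself produce a limit in $R$. What actually closes the argument is that \emph{finitely many} approximation steps push the remainder into $F_{\gamma'+\ell_1}\subseteq\xi'_1\O\subseteq\mm_R\O$, so that $\O=R+\mm_R\O$, and then Nakayama on the finite $R$-module $\O$ finishes; no limit is needed.

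The second equality has a genuine gap, and it sits exactly where the lemma's real content lies. To invoke the presentation \eqref{17} with $\ul x=(\ul\xi,s)$ you need the surjection $\O[\ul\xi,s]\onto A$, i.e.\ that $\ul\xi,s$ generate the Rees algebra $A=\bigoplus_i(F_i\cap\O)s^{-i}$ as an $\O$-\emph{algebra}; generating the ideal $\iota(0)$ is not the same statement, and your justification does not even establish the latter. Indeed $\iota(0)$ contains the entire positive part $\bigoplus_{i>0}(F_i\cap\O)s^{-i}$, which is strictly larger than $\mm A+As$, so the observation $\xi'_j=s^{\ell_j}\xi_j\in\ideal{\ul\xi,s}$ only accounts for $\mm A$ and $As$. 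Concretely, for $f\in F_i\cap\O$ with $i>0$ one must write $fs^{-i}$ as a polynomial in $\ul\xi,s$ with coefficients in $\O$ lying in the correct graded pieces $(F_{i-\ell_j}\cap\O)s^{-(i-\ell_j)}$, and this is precisely where the hypothesis $\Gamma'=\ideal{\ul\ell}$ must enter: the paper proves it by showing that $A/sA\cong\gr^F\O\cong\KK[\Gamma']$ is generated by the classes of $\ul\xi$, transporting this degree by degree down to the conductor $\gamma'$, and concluding $A=\O[\ul\xi,s]$ by graded Nakayama. Your final step (embedding everything into $\KK\set{s,t}$ via Lemma~\ref{30} and comparing images) is sound \emph{once} the surjectivity of $\O\set{\ul\xi,s}\to\O_S$ is available, but that surjectivity is exactly the missing ingredient, not a consequence of $\mm=\ideal{\ul\xi'}$.
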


\begin{proof}
By choice of $F_\bullet$, there is a cartesian square
\[
\xymatrix{
\llap{$B=\,$}\ol\O[t,s]\ar@{^(->}[r] & \ol\O[s^{\pm1}]\\
\llap{$A=\,$}\bigoplus_{i\in\ZZ}(F_i\cap\O)s^{-i}\ar@{^(->}[u]\ar@{^(->}[r] & \O[s^{\pm1}]\ar@{^(->}[u]
}
\]
of finite type graded $\O$-algebras.
Thus $\xi\in A\cap\mm[s^{\pm1}]$ if $\xi'\in\mm\cap k[t']$.

By hypothesis and \eqref{19}, the symbols $\sigma(\ul\xi')$ generate the graded $\KK$-algebra $\gr^F\O$.
Then $\ol{\sigma(\ul\xi')}=\sigma(\oul\xi')$ generate 
\[
\gr^F\mm/\gr^F\mm^2=\gr^F(\mm/\mm^2)
\]
and hence $\oul\xi'$ generate $\mm/\mm^2$ over $\KK$.
Then $\mm=\ideal{\ul\xi'}_\O$ by Nakayama's lemma and hence $\O=\KK\set{\ul\xi'}$ by the analytic inverse function theorem.

Under the graded isomorphism \eqref{24} with $\xi$ as in \eqref{33}
\[
\xymatrix@R=0em{
(A/As)_\ell\ar[r]^-{\cdot s^\ell} & \gr^F_\ell\O,\\
\ol\xi\ar@{|->}[r] & \sigma(\xi').
}
\]
The graded $\KK$-algebra $A/sA$ is thus generated by $\ol{\ul\xi}$.
Extend $F_\bullet$ to the graded filtration $F_\bullet[s^{\pm1}]$ on $\ol\O[s^{\pm1}]$.
For $i\ge j$,
\[
\xymatrix{
(A/As)_i=\gr^F_iA_i\ar[r]^-{\cdot s^{i-j}}_-\cong & \gr^F_iA_j.
}
\]
Thus finitely many monomials in $\ul\xi,s$ generate any $A_j/F_iA_j\cong F_j/F_i$ over $\KK$.
With $\gamma'$ the conductor of $\Gamma'$ and $i=\gamma'+j$, $F_{\gamma'}\subset\ol\mm\cap\O=\mm$ and hence $F_i=F_{\gamma'}F_j\subset\mm F_j$.
Therefore these monomials generate $A_j$ as $\O$-module by Nakayama's lemma.
It follows $A=\O[\ul\xi,s]$ as graded $\KK$-algebra.
Using $\O=\KK\set{\ul\xi'}$ and $\ul\xi'=\ul\xi s^{\ul\ell}$ then $\O_S=\KK\set{\ul\xi',\ul\xi,s}=\KK\set{\ul\xi,s}$ (see \eqref{17}).
\end{proof}


We now reverse the above construction to deform generators of a semigroup ring.
Let $\Gamma$ be a numerical semigroup with conductor $\gamma$ generated by $\ul\ell=\ell_1,\dots,\ell_n$.
Pick corresponding indeterminates $\ul x=x_1,\dots,x_n$.
The weighted degree $\deg(-)$ defined by $\deg(\ul x)=\ul\ell$ makes $\KK[\ul x]$ a graded $\KK$-algebra and induces on $\KK\set{\ul x}$ a weighted order $\ord(-)$ and initial part $\inp(-)$ .
The assignment $x_i\mapsto\ell_i$ defines a presentation of the semigroup ring of $\Gamma$ (see \eqref{19})
\[
\KK[\ul x]/I\cong\KK[\Gamma]\subset\KK[t']\subset\KK\set{t'}=\ol\O.
\]
The defining ideal $I$ is generated by homogeneous binomials $\ul f=f_1,\dots,f_m$ of weighted degrees $\deg(\ul f)=\ul d$.
Consider elements $\ul\xi=\xi_1,\dots,\xi_n$ defined by
\begin{equation}\label{28}
\xi_j=t^{\ell_j}+\sum_{i\ge\ell_j+\Delta\ell_j}\xi_{j,i}t^is^{i-\ell_j}\in\KK[t,s]\subset\ol\O[t,s]=B
\end{equation}
with $\Delta\ell_j\in\NN\setminus\set{0}\cup\set{\infty}$.
Set
\[
\delta=\min\set{\Delta\ul\ell},\quad\Delta\ul\ell=\Delta\ell_1,\dots,\Delta\ell_n.
\]
With $\deg(t)=1=-\deg(s)$ $\ul\xi$ defines a map of graded $\KK$-algebras $\KK[\ul x,s]\to\KK[t,s]$ and a map of analytically graded $\KK$-analytic domains $\KK\set{\ul x,s}\to\KK\set{t,s}$ (see \cite{SW73} for analytic gradings).


\begin{rmk}
Converse to \eqref{33}, any homogeneous $\xi\in\KK\set{t,s}$ of weighted degree $\ell$ can be written as $\xi=\xi'/s^\ell$ for some $\xi'\in\KK\set{t'}$.
It follows that $\xi(t,1)=\xi'(t)\in\KK\set{t}$.
\end{rmk}


Consider the curve germ $C$ with $\KK$-analytic ring
\begin{equation}\label{39}
\O=\O_C=\KK\set{\ul\xi'},\quad\ul\xi'=\ul\xi(t,1),
\end{equation}
and value semigroup $\Gamma'\supset\Gamma$.


We now describe when \eqref{28} generate the flat deformation in Proposition~\ref{20}.


\begin{prp}\label{25}
The deformation \eqref{28} satisfies $\Gamma'=\Gamma$ if and only if there is a $\ul f'\in\KK\set{\ul x,s}^m$ with homogeneous components such that 
\begin{equation}\label{47}
\ul f(\ul\xi)=\ul f'(\ul\xi,s)s
\end{equation}
and $\ord(f'_i(\ul x,1))\ge d_i+\min\set{\Delta\ul\ell}$.
The flat deformation in Proposition~\ref{20} is then defined by
\begin{equation}\label{38}
\O_S=\KK\set{\ul\xi,s}=\KK\set{\ul x,s}/\ideal{\ul F},\quad\ul F=\ul f-\ul f's.
\end{equation}
\end{prp}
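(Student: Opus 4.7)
For the forward direction, I assume $\Gamma'=\Gamma$ and construct $\ul f'$ as follows. Setting $u=ts$, I write $\xi_j=t^{\ell_j}h_j(u)$ with $h_j\in 1+u^{\Delta\ell_j}\KK\set{u}$; since each binomial $f_i=\ul x^\alpha-\ul x^\beta$ has $\alpha\cdot\ul\ell=\beta\cdot\ul\ell=d_i$, a direct expansion gives
\[
f_i(\ul\xi)=t^{d_i}\bigl(H_\alpha(u)-H_\beta(u)\bigr)\in t^{d_i}u^\delta\KK\set{u}\subset s^\delta\O_Y.
\]
By Lemma~\ref{10}, $\O_S=\KK\set{\ul\xi,s}\subset\O_Y=\KK\set{s,t}$, and the induced map $\O_S/s\O_S\cong\KK[\Gamma']\hookrightarrow\KK[\NN]\cong\O_Y/s\O_Y$ is injective because $\Gamma'\subset\NN$. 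Since $s$ is a nonzerodivisor on both domains, I can inductively show $s^k\O_Y\cap\O_S=s^k\O_S$ and conclude $f_i(\ul\xi)=s^\delta h'_i$ with $h'_i\in\O_S$ homogeneous of weighted degree $d_i+\delta$ (using $\deg s=-1$). Lifting $h'_i$ via Lemma~\ref{10} to a homogeneous $\tilde h_i\in\KK\set{\ul x,s}$ of the same degree, I set $f'_i:=s^{\delta-1}\tilde h_i$, which is homogeneous of degree $d_i+1$ and satisfies $sf'_i(\ul\xi,s)=f_i(\ul\xi)$. Homogeneity forces every monomial $\ul x^\alpha s^b$ in $\tilde h_i$ to satisfy $\alpha\cdot\ul\ell\ge d_i+\delta$, giving the required bound $\ord(f'_i(\ul x,1))\ge d_i+\delta$.

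For the converse, let $J'\subset\KK\set{\ul x}$ denote the defining ideal of $C$ and let $G_\bullet$ be the weighted-order filtration on $\O_C$. Setting $s=1$ in $f_i(\ul\xi)=sf'_i(\ul\xi,s)$ gives $f_i-f'_i(\ul x,1)\in J'$, and since $\ord(f'_i(\ul x,1))\ge d_i+\delta>d_i=\deg f_i$, the weighted initial form satisfies $\inp(f_i-f'_i(\ul x,1))=f_i$, so $I=\ideal{\ul f}\subset\inp(J')$. This produces a surjection $\KK[\Gamma]=\KK[\ul x]/I\twoheadrightarrow\KK[\ul x]/\inp(J')=\gr^G\O_C$, hence $\dim_\KK\O_C/G_n\le|\Gamma\cap[0,n)|$. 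From $G_n\subset F_n$ I get $\dim_\KK\O_C/F_n\le\dim_\KK\O_C/G_n$, and from $\gr^F\O_C=\KK[\Gamma']$ (cf.~\eqref{19}), $\dim_\KK\O_C/F_n=|\Gamma'\cap[0,n)|$. Therefore $|\Gamma'\cap[0,n)|\le|\Gamma\cap[0,n)|$ for every $n$, which combined with $\Gamma\subset\Gamma'$ forces $\Gamma=\Gamma'$.

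To conclude~\eqref{38}, I invoke Lemma~\ref{10} to identify $\O_S=\KK\set{\ul\xi,s}$, giving a surjection $\psi\colon\KK\set{\ul x,s}/\ideal{\ul F}\twoheadrightarrow\O_S$. Modulo $s$, both sides become $\KK\set{\ul x}/\ideal{\ul f}=\O_{C_0,0}$ and $\psi$ is the identity, so $\ker\psi\subset s\cdot(\KK\set{\ul x,s}/\ideal{\ul F})$. Since $s$ is a nonzerodivisor on the domain $\O_S$, iterating this containment yields $\ker\psi\subset\bigcap_k s^k(\KK\set{\ul x,s}/\ideal{\ul F})=0$ by Krull's intersection theorem. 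The main technical obstacle will be the forward direction's identity $s^k\O_Y\cap\O_S=s^k\O_S$, which lifts divisibility of $f_i(\ul\xi)$ by $s^\delta$ from $\O_Y$ up to $\O_S$; once this is in place, the order condition on $f'_i(\ul x,1)$ follows automatically from the homogeneity of the chosen lift.
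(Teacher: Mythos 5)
Your argument is correct in substance, but it takes a genuinely different route from the paper's proof, so let me compare the two. For the forward direction the paper invokes flatness of $\pi$ in Proposition~\ref{20} to lift the relations $\ul f$ of $t^{\ul\ell}$ to relations $\ul F=\ul f-\ul f's$ of $\ul\xi$, and extracts the order bound from the tail terms of $\ul\xi$; you instead compute directly that $f_i(\ul\xi)\in s^\delta\KK\set{s,t}$ (via $u=ts$), descend this divisibility to $\O_S$ through $s^k\O_Y\cap\O_S=s^k\O_S$, and then produce $f'_i=s^{\delta-1}\tilde h_i$ by a homogeneous lift, so that the bound $\ord(f'_i(\ul x,1))\ge d_i+\delta$ falls out of homogeneity. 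For the converse the paper argues by contradiction with an element $h$ of maximal weighted order realizing a value in $\Gamma'\setminus\Gamma$, correcting it by $\sum_iq_iF_i(\ul x,1)$; you instead note $f_i=\inp\bigl(f_i-f'_i(\ul x,1)\bigr)\in\inp(J')$ and compare Hilbert functions of $\gr^G\O_C$ and $\gr^F\O_C$, which is arguably cleaner and only uses $\ord(f'_i(\ul x,1))>d_i$. For \eqref{38} the paper again relies on flatness (lifted generators generate), while your Krull-intersection argument is an elementary substitute. What your route buys is independence from the relation-lifting property of flat maps; what it costs is that the weight shifts onto the special-fibre identifications you assert in passing: the injectivity of $\O_S/s\O_S\to\O_Y/s\O_Y$ (note that $\O_Y/s\O_Y=\KK\set{t}$, not $\KK[\NN]$, and $\O_S/s\O_S$ is the analytic local ring of $C_0$, not literally $\KK[\Gamma']$), and the claim that $\psi$ is an isomorphism modulo $s$, i.e.\ that the kernel of $\KK\set{\ul x}\onto\O_S/s\O_S$ is exactly $\ideal{\ul f}\KK\set{\ul x}$. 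Both statements do hold --- either by tracing the fibre description in Proposition~\ref{20}, or directly: $\ideal{\ul f}\KK\set{\ul x}$ is prime since the monomial curve germ is unibranch, and $\O_S/s\O_S$ maps onto $\KK\set{t^{\ell_1},\dots,t^{\ell_n}}$, which excludes a nonzero (hence maximal) kernel and likewise gives the injectivity you need --- but they carry exactly the flatness-type content your proof claims to bypass, so they should be argued rather than asserted. The remaining analytic-grading points (that homogeneous components of convergent series are again convergent, that a homogeneous element of $\O_S$ lifts to a homogeneous element of $\KK\set{\ul x,s}$, and that setting $s=1$ is legitimate for homogeneous elements because $\deg s=-1$) are fine and match the paper's implicit use of \cite{SW73}.
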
 

\begin{proof}
First let $\Gamma'=\Gamma$.
Then Lemma~\ref{10} yields the first equality in \eqref{38}.
By flatness of $\pi$ in Proposition~\ref{20}, the relations $\ul f$ of $\ul\xi(t,0)=t^{\ul\ell}$ lift to relations $\ul F\in\KK\set{\ul x,s}^m$ of $\ul\xi$.
That is, $\ul F(\ul x,0)=\ul f$ and $\ul F(\ul\xi,s)=0$.
Since $\ul f$ and $\ul\xi$ have homogeneous components of weighted degrees $\ul d$ and $\ul\ell$, $\ul F$ can be written as $\ul F=\ul f-\ul f's$ where $\ul f'\in\KK\set{\ul x,s}^m$ has homogeneous components of weighted degrees $\ul d+\ul 1$.
This proves in particular the last claim.
Since $f_i(t^{\ul\ell})=0$, any term in $f'_i(\ul\xi,s)s=f_i(\ul\xi)$ involves a term of the tail of $\xi_j$ for some $j$.
Such a term is divisible by $t^{d_i+\Delta\ell_j}$ which yields the bound for $\ord(f'_i(\ul x,1))$.

Conversely let $\ul f'$ with homogeneous components satisfy \eqref{47}.
Suppose that there is a $k'\in\Gamma'\setminus\Gamma$.
Take $h\in\KK\set{\ul x}$ of maximal weighted order $k$ such that $\upsilon(h(\ul\xi'))=k'$.
In particular, $k<k'$ and $\inp h(t^{\ul\ell})=0$.
Then $\inp h\in I=\ideal{\ul f}$ and $\inp h=\sum_{i=1}^mq_if_i$ for some $\ul q\in\KK[\ul x]^m$.
Set 
\[
h'=h-\sum_{i=1}^mq_iF_i(\ul x,1)=h-\inp h+\sum_{i=1}^mq_if'_i(\ul x,1).
\]
Then $h'(\ul\xi')=h(\ul\xi')$ by \eqref{47} and hence $\upsilon(h'(\ul\xi'))=k'$.
With \eqref{47} and homogeneity of $\ul f'$ it follows that $\ord(h')>k$ contradicting the maximality of $k$.
\end{proof}


\begin{rmk}
The proof of Proposition~\ref{25} shows in fact that the condition $\Gamma'=\Gamma$ is equivalent to the flatness of a homogeneous deformation of the parametrization as in \eqref{28}. 
These $\Gamma$-constant deformations are a particular case of $\delta$-constant deformations of germs of complex analytic curves (see \cite[\S3, Cor.~1]{Tei77}).
\end{rmk}


The following numerical condition yields the hypothesis of Proposition~\ref{25}.

\begin{lem}\label{21}
If $\min\set{\ul d}+\delta\ge\gamma$ then $\Gamma'=\Gamma$.
\end{lem}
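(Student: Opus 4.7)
The plan is to adapt the maximal-weighted-order reduction from the converse direction of the proof of Proposition~\ref{25}, but without having to construct $\ul f'$ explicitly: the hypothesis $\min\set{\ul d}+\delta\ge\gamma$ will ensure that the elements $f_i(\ul\xi')$ lie so deep in $\O$ (valuation $\ge\gamma$) that they behave, below the conductor, as if they were zero.

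First I would note $\Gamma\subset\Gamma'$: the construction \eqref{28} gives $\upsilon(\xi'_j)=\ell_j$, so $\Gamma'\setminus\Gamma\subset[0,\gamma-1]$ is finite. The main technical input is then the bound $\upsilon(f_i(\ul\xi'))\ge d_i+\delta$. To prove it I endow $\KK[t,s]$ with the weighting $\deg t=1$, $\deg s=-1$, under which each $\xi_j$ of \eqref{28} is weighted-homogeneous of degree $\ell_j$ and satisfies $\xi_j\equiv t^{\ell_j}\pmod{s^\delta}$. Since $f_i$ is a weighted-homogeneous binomial of degree $d_i$ with $f_i(t^{\ul\ell})=0$, the element $f_i(\ul\xi)\in\KK[t,s]$ is simultaneously homogeneous of degree $d_i$ and divisible by $s^\delta$, hence of the form $\sum_{b\ge\delta}c_b t^{d_i+b}s^b$; evaluating at $s=1$ yields the claimed bound.

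To finish, I argue by contradiction: assume some $k'\in\Gamma'\setminus\Gamma$ exists, so $k'<\gamma$, and pick $h\in\KK\set{\ul x}$ with $\upsilon(h(\ul\xi'))=k'$ of maximal weighted order $k$; the maximum exists since $\ord h\le\upsilon(h(\ul\xi'))=k'$. If $k=k'$, then some monomial $\ul x^\alpha$ of the weighted-homogeneous $\inp h$ satisfies $\sum\alpha_j\ell_j=k'$, contradicting $k'\notin\Gamma$. So $k<k'$, $\inp h(t^{\ul\ell})=0$, $\inp h\in I=\ideal{\ul f}$, and I write $\inp h=\sum q_i f_i$ with $q_i$ weighted-homogeneous of degree $k-d_i$ (so $d_i\le k$ whenever $q_i\ne0$). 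Then $h':=h-\sum q_i f_i$ has $\ord h'>k$, and using the key bound,
\[
\upsilon\Bigl(\sum_i q_i(\ul\xi')f_i(\ul\xi')\Bigr)\ge k+\delta\ge\gamma>k',
\]
where $k+\delta\ge\gamma$ holds because $d_i\le k$ for some $i$ with $q_i\ne0$ and $d_i+\delta\ge\gamma$ by hypothesis. Hence $\upsilon(h'(\ul\xi'))=k'$, contradicting the maximality of $k$. The only nontrivial step is the homogeneity-based valuation bound on $f_i(\ul\xi')$; everything else is the standard maximal-order reduction mirroring Proposition~\ref{25}.
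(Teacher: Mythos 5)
Your proof is correct and follows essentially the same route as the paper's: reduce to the initial part $\inp h$, observe that $\inp h(t^{\ul\ell})=0$ forces $\inp h\in\ideal{\ul f}$ and hence $\ord h\ge\min\set{\ul d}$, and use the homogeneity of the deformation \eqref{28} to gain the extra $\delta$ in valuation. The paper's three-line proof leaves the maximal-order iteration implicit (it is the same reduction as in the converse direction of Proposition~\ref{25}); you supply it explicitly, together with the bound $\upsilon(f_i(\ul\xi'))\ge d_i+\delta$, which makes the argument more complete but is not a different method.
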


\begin{proof}
Any $k\in\Gamma'$ is of the form $k=\upsilon(p(\ul\xi'))$ for some $p\in\KK\set{\ul x}$ with $p_0=\inp(p)\in\KK[\ul x]$.
If $p_0(t^{\ul\ell})\ne 0$, then $k\in\Gamma$.
Otherwise, $p_0\in\ideal{\ul f}$ and hence $k\ge\min\set{\ul d}+\min\set{\ul \ell'}$.
The second claim follows.
\end{proof}

\section{Set-theoretic complete intersections}\label{42}

We return to the special case $\Gamma=\ideal{\ell,m,n}$ of \S\ref{31}.
Recall Bresinsky's method to show that $\Spec(\KK[\Gamma])$ is a set-theoretic complete intersection (see \cite{Bre79b}).
Starting from the defining equations \eqref{5} in case \eqref{H1} he computes
\begin{align*}
f_1^c=(x^a-y^{b_1}z^{c_2})^c
&=x^ag_1\pm y^{b_1c}z^{c_2c}\\
&=x^ag_1\pm y^{b_1c}z^{(c_2-1)c}(x^{a_1}y^{b_2}-f_3)\\
&=x^{a_1}g_2\mp y^{b_1c}z^{(c_2-1)c}f_3\\
&\equiv x^{a_1}g_2\mod\ideal{f_3}
\end{align*}
where $g_1\in\ideal{x,z}$ and
\[
g_2=x^{a-a_1}g_1\pm y^{b_1c+b_2}z^{(c_2-1)c}.
\]
He shows that if $c_2\ge2$, then further reducing $g_2$ by $f_3$ yields
\begin{align*}
g_2&=x^{a-a_1}g_1\pm y^{b_1c+b_2}z^{(c_2-2)c}(x^{a_1}y^{b_2}-f_3)\\
&\equiv x^{a-a_1}g_1\pm x^{a_1}y^{b_1c+2b_2}z^{(c_2-2)c}\mod\ideal{f_3}\\
&\equiv x^{a_1}\bigl(\tilde g_1+y^{b_1c+2b_2}z^{(c_2-2)c}\bigr)\mod\ideal{f_3}\\
&\equiv x^{a_1}g_3\mod\ideal{f_3}
\end{align*}
for some $\tilde g_1\in\KK[x,y,z]$.
Iterating $c_2$ many times yields a relation 
\begin{equation}\label{35}
f_1^c=qf_3+x^kg,\quad k=a_1c_2,
\end{equation}
where $g\equiv y^{\ell'}\mod\ideal{x,z}$ with $\ell'$ from \eqref{GS1a}.
One computes that
\[
x^{a_1}f_2=y^{b_1}f_3-z^{c_1}f_1,\quad
z^{c_2}f_2=x^{a_2}f_3-y^{b_2}f_1.
\]
Bresinsky concludes that
\begin{equation}\label{40}
Z(x,z)\not\subset Z(g,f_3)\subset Z(f_1,f_3)=Z(f_1,f_2,f_3)\cup Z(x,z)
\end{equation}
making $\Spec(\KK[\Gamma])=Z(g,f_3)$ a set-theoretic complete intersection.


As a particular case of \eqref{28} consider three elements
\begin{align}\label{34}
\xi&=t^\ell+\sum_{i\ge\ell+\Delta\ell}\xi_is^{i-\ell}t^i,\\
\eta&=t^m+\sum_{i\ge m+\Delta m}\eta_is^{i-m}t^i,\nonumber\\
\zeta&=t^n+\sum_{i\ge n+\Delta n}\zeta_is^{i-n}t^i\in\KK[t,s].\nonumber
\end{align}
Consider the curve germ $C$ in \eqref{39} with $\KK$-analytic ring
\begin{equation}\label{27}
\O=\O_C=\KK\set{\xi',\eta',\zeta'},\quad (\xi',\eta',\zeta')=(\xi,\eta,\zeta)(t,1),
\end{equation}
and value semigroup $\Gamma'\supset\Gamma$.
We aim to describe situations where $C$ is a set-theoretic complete intersection under the hypothesis that $\Gamma'=\Gamma$.
By Proposition~\ref{25}, $(\xi,\eta,\zeta)$ then generate the flat deformation of $C_0=\Spec^\an(\KK[\Gamma])$ in Proposition~\ref{20}.
Let $F_1,F_2,F_3$ be the defining equations from Proposition~\ref{25}.


\begin{lem}\label{36}
If $g$ in \eqref{35} deforms to $G\in\KK\set{x,y,z,s}$ such that
\begin{equation}\label{37}
F_1^c=qF_3+x^kG,\quad G(x,y,z,0)=g,
\end{equation}
then
\[
C=S\cap Z(s-1)=Z(G,F_3,s-1)
\]
is a set-theoretic complete intersection.
\end{lem}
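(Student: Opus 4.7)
The plan is to adapt Bresinsky's central-fibre decomposition \eqref{40} to the fibre over $s = 1$. By Proposition~\ref{20} one has $C = S \cap Z(s-1) = Z(F_1, F_2, F_3, s-1)$, so it suffices to prove $Z(G, F_3, s-1) = Z(F_1, F_2, F_3, s-1)$. For the forward inclusion, on $C$ both $F_1$ and $F_3$ vanish, so \eqref{37} forces $x^k G|_C = 0$; since $x|_C = \xi|_{s=1} = t^\ell + \cdots$ is a non-zero-divisor on the irreducible germ $C$, it follows that $G|_C = 0$, whence $C \subset Z(G, F_3, s-1)$.

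For the reverse inclusion, \eqref{37} first gives $Z(G, F_3, s-1) \subset Z(F_1, F_3, s-1)$ since the left-hand side forces $F_1^c = 0$. Every component of $Z(F_1, F_3, s-1)$ has dimension at least $1$ by Krull's \emph{Hauptidealsatz}, and $C$ is such a component. Assume for contradiction that $Z(G, F_3, s-1)$ contains another $1$-dimensional component $E \neq C$. Then $F_2|_{s=1}$ is not identically zero on $E$. By flatness of $S$ (Proposition~\ref{20}), Bresinsky's Hilbert--Burch syzygies for $f_1, f_2, f_3$ — encoded by the rows of $M_0$ in \eqref{23} — lift to weighted-homogeneous syzygies of $F_1, F_2, F_3$: there exist $X_i, Y_i, Z_i \in \KK\set{x,y,z,s}$ of weighted degrees $a_i\ell$, $b_im$, $c_in$ respectively (with $\deg(x,y,z,s) = (\ell, m, n, -1)$), specializing at $s = 0$ to $x^{a_i}, y^{b_i}, z^{c_i}$, such that $X_1 F_2 = Y_1 F_3 - Z_1 F_1$ and $Z_2 F_2 = X_2 F_3 - Y_2 F_1$. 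These relations force $E \subset Z(X_1, Z_2, s-1)$, and combined with $E \subset Z(G)$ yield $E \subset Z(X_1, Z_2, G, s-1)$.

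It then suffices to show that $Z(X_1, Z_2, G, s-1)$ is $0$-dimensional at the origin, which contradicts $\dim E = 1$. All equations $X_1$, $Z_2$, $G$ are weighted homogeneous (in particular $\deg G = \ell m$ by comparing weighted degrees in \eqref{37}), so $Z(X_1, Z_2, G) \subset \KK^4$ is invariant under the weighted $\KK^*$-action $\lambda \cdot (x, y, z, s) = (\lambda^\ell x, \lambda^m y, \lambda^n z, \lambda^{-1} s)$. Its fibre over $s = 0$ at the origin is $Z(x^{a_1}, z^{c_2}, g) = Z(x, z, y^{\ell'}) = \{0\}$, which is $0$-dimensional, using $g \equiv y^{\ell'} \pmod{\ideal{x, z}}$ from \eqref{35} together with $\ell' = \ell$ by Proposition~\ref{7}.\eqref{7a}. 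Upper semi-continuity of fibre dimension in the family $Z(X_1, Z_2, G) \to L$ then yields $0$-dimensional fibres over a neighbourhood of $0 \in L$, and $\KK^*$-equivariance transports this bound to the fibre at $s = 1$, producing the desired contradiction.

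The main obstacle is the control of $Z(G, F_3, s-1)$ at the non-central fibre $s = 1$, which is not given by an explicit formula. The two ingredients that make the argument work are the flatness of $S \to L$, which guarantees weighted-homogeneous lifts of the Hilbert--Burch syzygies, and the weighted $\KK^*$-action, which reduces every dimension count at $s = 1$ to the corresponding statement at the central fibre where Bresinsky's original computation applies.
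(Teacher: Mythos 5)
Your overall strategy is the same as the paper's: lift Bresinsky's decomposition \eqref{40} to the deformation and rule out components inside $Z(X_1,Z_2)$ by a dimension count against the central fibre. You implement the two key steps differently, though. Where the paper invokes Schaps' theorem plus Grauert approximation to deform the whole matrix $M_0$ (so that $F_1,F_2,F_3$ are literally the maximal minors of a matrix $M$ over $\KK\set{x,y,z,s}$), you only lift the two Hilbert--Burch relations $x^{a_1}f_2=y^{b_1}f_3-z^{c_1}f_1$ and $z^{c_2}f_2=x^{a_2}f_3-y^{b_2}f_1$ using flatness of $\pi$; since only the relations, not the full determinantal structure, enter the inclusion $Z(F_1,F_3)\subset Z(F_1,F_2,F_3)\cup Z(X_1,Z_2)$, this is a legitimate simplification. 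Second, the paper performs the dimension count at the level of germs at $\iota(0)$: $Z(G,F_3)$ and $Z(G,X_1,Z_2)$ are deformations of the complete intersections $Z(g,f_3)$ and $Z(g,x^{a_1},z^{c_2})$, hence of pure dimensions $2$ and $1$, which forces $Z(G,F_3)=S$, and only then passes to the fibre $s=1$ via Proposition~\ref{20}. You instead argue directly in the slice $\set{s=1}$ via semicontinuity and the weighted $\KK^*$-action.

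It is in this second step that your write-up has genuine soft spots. First, your argument needs $G$ (and the lifted syzygy entries) to be weighted homogeneous; otherwise $Z(X_1,Z_2,G)$ is not $\KK^*$-invariant, and these elements, being germs at the origin of $\KK^4$, cannot even be evaluated near the point $(0,0,0,1)$. Homogeneity of $G$ is not among the hypotheses of the lemma, and ``comparing weighted degrees in \eqref{37}'' presupposes it rather than proves it. This is repairable: since $F_1$, $F_3$ and $x^k$ are homogeneous and $g$ is homogeneous of degree $\ell m$, one may replace $q$ and $G$ by the appropriate homogeneous components of \eqref{37} without disturbing $G(x,y,z,0)=g$; but this reduction has to be made explicitly. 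Second, upper semicontinuity of fibre dimension controls $\dim_p$ of the fibre through $p$ only at points $p$ of the total space near the origin; it does not yield finiteness of whole fibres over a neighbourhood of $0\in L$, and the transport to $s=1$ additionally requires that every point of $\set{s=1}$ sufficiently close to $(0,0,0,1)$ lies on the $\KK^*$-orbit of a point arbitrarily close to the origin (which holds, e.g.\ by choosing $\lambda$ with $|x_0|<\lambda^{\ell+1}$, $|y_0|<\lambda^{m+1}$, $|z_0|<\lambda^{n+1}$), but as written this is asserted rather than proved. The paper's purity argument for germs at the origin avoids all analysis at $s=1$ and is the cleaner route; with the homogeneity reduction and the orbit argument supplied, your version also goes through.
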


\begin{proof}
Consider a matrix of indeterminates
\[
M=
\begin{pmatrix}
Z_1 & X_1 & Y_1\\
Y_2 & Z_2 & X_2\\
\end{pmatrix}
\]
and the system of equations defined by its maximal minors
\begin{align*}
F_1&=X_1X_2-Y_1Z_2,\\
F_2&=Y_1Y_2-X_2Z_1,\\
F_3&=X_1Y_2-Z_1Z_2.
\end{align*}
By Schaps' theorem (see \cite{Sch77}), there is a solution with coefficients in $\KK\set{x,y,z}\fs{s}$ that satisfies $M(x,y,z,0)=M_0$.
By Grauert's approximation theorem (see \cite{Gra72}), the coefficients can be taken in $\KK\set{x,y,z,s}$.
Using the fact that $M$ is a matrix of relations, we imitate in Bresinsky's argument in \eqref{40}, 
\[
Z(G,F_3)\subset Z(F_1,F_3)=Z(F_1,F_2,F_3)\cup Z(X_1,Z_2).
\]
The $\KK$-analytic germs $Z(G,F_3)$ and $Z(G,X_1,Z_2)$ are deformations of the complete intersections $Z(g,f_3)$ and $Z(g,x^{a_1},z^{c_2})$, and are thus of pure dimensions $2$ and $1$ respectively.
It follows that $Z(G,F_3)$ does not contain any component of $Z(X_1,Z_2)$ and must hence equal $Z(F_1,F_2,F_3)=S$.
The claim follows.
\end{proof}


\begin{prp}\label{29}
Set $\delta=\min(\Delta\ell,\Delta m,\Delta n)$ and $k=a_1c_2$. 
Then the curve germ $C$ defined by \eqref{34} is a set-theoretic complete intersection if 
\begin{align*}
\min(d_1,d_2,d_3)+\delta&\geq\gamma,\\
\min(d_1,d_3)+\delta&\geq\gamma+k\ell,
\end{align*}
or, equivalently,
\begin{align*}
\min(d_1,d_2+k\ell,d_3)+\delta&\geq\gamma+k\ell.
\end{align*}
\end{prp}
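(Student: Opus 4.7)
My plan is to verify the hypotheses of Lemma~\ref{36}. The first condition $\min(d_1,d_2,d_3)+\delta\ge\gamma$ together with Lemma~\ref{21} gives $\Gamma'=\Gamma$, and Proposition~\ref{25} then realizes $\O_S=R/\ideal{F_1,F_2,F_3}$ with $R=\KK\set{x,y,z,s}$ and $F_i=f_i-sf_i'$, where $f_i'\in R$ is weighted-homogeneous of degree $d_i+1$ satisfying $\ord(f'_i(x,y,z,1))\ge d_i+\delta$. Since the homogeneous components of $f'_i$ sit in distinct weighted degrees $d_i+1+\beta$, the order bound forces $f'_i\in s^{\delta-1}R$, so $F_i\equiv f_i\pmod{s^\delta R}$ for $i=1,3$. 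Combined with Bresinsky's identity $f_1^c=qf_3+x^kg$ from \eqref{35}, this yields $F_1^c-qF_3-x^kg\in s^\delta R$.

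By Lemma~\ref{36} it suffices to lift Bresinsky's identity to a relation $F_1^c=QF_3+x^kG$ in $R$ with $G(x,y,z,0)=g$. I would construct $(Q,G)$ by successive approximation in the $s$-adic filtration: start from $(Q_0,G_0)=(q,g)$, and inductively, given $(Q_\nu,G_\nu)$ with $F_1^c-Q_\nu F_3-x^kG_\nu=s^{\delta+\nu}A_\nu$ for some weighted-homogeneous $A_\nu\in R$ of degree $cd_1+\delta+\nu$, decompose $A_\nu|_{s=0}=\alpha f_3+x^k\beta$ in $\KK\set{x,y,z}$ and set $Q_{\nu+1}=Q_\nu+s^{\delta+\nu}\alpha$, $G_{\nu+1}=G_\nu+s^{\delta+\nu}\beta$, reducing the error to order $s^{\delta+\nu+1}R$. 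The resulting formal limits in $\KK\fs{x,y,z,s}$ give a formal solution, and Grauert's approximation theorem (invoked analogously in the proof of Lemma~\ref{36}) converts it to an analytic $(Q,G)\in R^2$ with the prescribed specialization at $s=0$.

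The technical core, and where the second condition $\min(d_1,d_3)+\delta\ge\gamma+k\ell$ enters, is the claim that $A_\nu|_{s=0}\in\ideal{f_3,x^k}$ at each step. The terms of $A_\nu|_{s=0}$ involve products of powers $f_1^{c-i}$ and $q$ with homogeneous components $f'_{i,\beta}$ of $f'_i$ for $i\in\set{1,3}$ and $\beta\ge\delta-1$; these components have weighted degree $d_i+1+\beta\ge d_i+\delta\ge\gamma+k\ell$. Any weighted-homogeneous element of $\KK\set{x,y,z}$ of degree $\ge\gamma+k\ell$ maps under the monomial parametrization $(x,y,z)\mapsto(t^\ell,t^m,t^n)$ into $t^{k\ell}\cdot t^\gamma\KK\set{t}\subset x^k\KK[\Gamma]$ and hence lies in the ideal $\ideal{f_1,f_2,f_3,x^k}$. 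Refining to $\ideal{f_3,x^k}$ uses Bresinsky's identity $f_1^c=qf_3+x^kg$ to absorb $f_1$-contributions via $f_1\cdot f_1^{c-1}=f_1^c\in\ideal{f_3,x^k}$, together with the Herzog syzygies $x^{a_1}f_2=y^{b_1}f_3-z^{c_1}f_1$ and $z^{c_2}f_2=x^{a_2}f_3-y^{b_2}f_1$ to absorb $f_2$-contributions; the equivalent combined form $\min(d_1,d_2+k\ell,d_3)+\delta\ge\gamma+k\ell$ of the hypotheses supplies via $d_2+\delta\ge\gamma$ the extra weighted-degree slack required for the $f_2$-reduction.

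The main obstacle is this refinement step from $\ideal{f_1,f_2,f_3,x^k}$ to $\ideal{f_3,x^k}$: while the weighted-degree argument easily places $A_\nu|_{s=0}$ into the large ideal, sharpening to the smaller ideal requires careful bookkeeping of the decomposition using the full structure of Herzog's case~\eqref{H1}. Once established, the iterative construction goes through and Lemma~\ref{36} delivers the set-theoretic complete intersection property.
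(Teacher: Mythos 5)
Your setup is fine (Lemma~\ref{21} gives $\Gamma'=\Gamma$, Proposition~\ref{25} gives $F_i=f_i-sf_i'$ with $f_i'\in s^{\delta-1}\KK\set{x,y,z,s}$), but the core of your argument --- that at every stage of the $s$-adic iteration the error satisfies $A_\nu|_{s=0}\in\ideal{f_3,x^k}$ --- is precisely what you do not prove, and the absorption you sketch does not work. The weighted-degree bound only yields membership in the large ideal $\ideal{f_1,f_2,f_3,x^k}$; to refine to $\ideal{f_3,x^k}$ you must eliminate the cofactors of $f_1$ and $f_2$, and neither Bresinsky's identity nor Herzog's syzygies do this: $f_1^c\in\ideal{f_3,x^k}$ helps only when the cofactor of $f_1$ happens to contain $f_1^{c-1}$, and the syzygies express $x^{a_1}f_2$ and $z^{c_2}f_2$ (not $f_2$ itself) through $f_3$ and $f_1$, so the $f_1$-terms reappear. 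Concretely, a term such as $f_1^{c-1}f_2$, which your decomposition produces, need not lie in $\ideal{f_3,x^k}$: for $f_1=x^3-yz$, $f_2=y^3-x^2z$, $f_3=z^2-xy^2$ (the case $a=b=2$ of \S\ref{45}, where $c=2$, $k=1$) one has $f_1f_2\equiv-y^4z\not\equiv0$ modulo $\ideal{f_3,x}$. Worse, it is not even clear that for the lifts $F_1,F_3$ handed to you by Proposition~\ref{25} an identity $F_1^c=QF_3+x^kG$ exists at all: changing a lift by an element of $s\ideal{F_1,F_2,F_3}$ introduces $F_1$- and $F_2$-multiples that are not visibly in $\ideal{F_3,x^k}$. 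Deferring this as ``careful bookkeeping'' defers the entire substance of the proof.

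The missing idea is that the second inequality is not used on the special fiber at all, but on the curve via its parametrization. Since $\Gamma'=\Gamma$, the conductor ideal of $\O$ in $\KK\set t$ is $t^\gamma\KK\set t$, so every element of valuation at least $\gamma+k\ell$ lies in $\xi'^k\O$. Because $F_i(\xi',\eta',\zeta',1)=0$, one gets $f_i(\xi',\eta',\zeta')=f_i'(\xi',\eta',\zeta',1)$ of valuation at least $d_i+\delta\geq\gamma+k\ell$ for $i=1,3$, hence $f_i(\xi',\eta',\zeta')\in\xi'^k\O$. This allows one to re-choose the homogeneous lifts so that $F_i-f_i$ is divisible by $x^k$ for $i=1,3$; substituting $f_i=F_i+x^k(\cdots)$ directly into Bresinsky's relation \eqref{35} then produces \eqref{37} in one step, with no iteration, no decomposition of error terms in $\KK[x,y,z]$, and no appeal to approximation beyond what Lemma~\ref{36} already uses. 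Replacing your fiberwise ideal-membership scheme by this conductor argument on the parametrization is what closes the gap.
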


\begin{proof}
By Lemma~\ref{21}, the first inequality yields the assumption $\Gamma'=\Gamma$ on \eqref{34}.
The conductor of $\xi^k\O$ equals $\gamma+k\ell$ and contains $(F_i-f_i)(\xi',\eta',\zeta')$, $i=1,3$, by the second inequality.
This makes $F_i-f_i$, $i=1,3$, divisible by $x^k$.
Substituting into \eqref{35} yields \eqref{37} and by Lemma~\ref{36} the claim.
\end{proof}


\begin{rmk} 
We can permute the roles of the $f_i$ in Bresinsky's method. 
If the role of $(f_1,f_3)$ is played by $(f_1,f_2)$, we obtain a formula similar to \eqref{35}, $f_1^b=qf_2+x^kg$ with $k=a_2b_1$.
Instead of $x^k$, there is a power of $y$ if we use instead $(f_2,f_1)$ or $(f_2,f_3)$ and a power of $z$ if we use $(f_3,f_1)$ or $(f_3,f_1)$. 
The calculations are the same.
In the examples we favor powers of $x$ in order to minimize the conductor $\gamma+k\ell$.  
\end{rmk}

\section{Series of examples}\label{45}

Redefining $a,b$ suitably, we specialize to the case where the matrix in \eqref{23} is of the form
\[
M_0=
\begin{pmatrix}
z & x & y\\
y^b & z & x^a\\
\end{pmatrix}.
\]
By Proposition~\ref{7}.\eqref{7a}, these define $\Spec(\KK[\ideal{\ell,m,n}])$ if and only if 
\[
\ell=b+2,\quad m=2a+1,\quad n=ab+b+1(=(a+1)\ell-m),\quad\gcd(\ell,m)=1.
\]
We assume that $a,b\ge2$ and $b+2<2a+1$ so that $\ell<m<n$.
The maximal minors \eqref{5} of $M_0$ are then
\[
f_1=x^{a+1}-yz,\quad f_2=y^{b+1}-x^az,\quad f_3=z^2-xy^b
\]
with respective weighted degrees
\[
d_1=(a+1)(b+2),\quad d_2=(2a+1)(b+1),\quad d_3=2ab+2b+2
\]
where $d_1<d_3<d_2$.
In Bresinsky's method \eqref{35} with $k=1$ reads 
\[
f_1^2-y^2f_3=xg,\quad g=x^{2a+1}-2x^ayz+y^{b+2}.
\]


We reduce the inequality in Proposition~\ref{29} to a condition on $d_1$.

\begin{lem}\label{43}
The conductor of $\xi\O$ is bounded by
\[
\gamma+\ell\leq d_2-\left\lfloor\frac{m}{\ell}\right\rfloor\ell<d_3.
\]
In particular, $d_2\geq\gamma+2\ell$ and $d_3>\gamma+\ell$.
\end{lem}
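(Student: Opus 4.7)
The plan is to compute the conductor $\gamma$ of $\Gamma=\ideal{\ell,m,n}=\ideal{b+2,2a+1,ab+b+1}$ explicitly and then verify the two inequalities by direct arithmetic. The leftmost bound is an equality in disguise, since $\upsilon(\xi)=\ell$ and (under the standing hypothesis $\Gamma'=\Gamma$ inherited from Proposition~\ref{29}) the conductor of the fractional ideal $\xi\O$ in $\ol\O$ equals $\gamma+\ell$.

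To compute $\gamma$, I would determine the Apéry set of $\Gamma$ with respect to $\ell$. The relations encoded by $M_0$, namely $m+n=(a+1)\ell$ and $2n=\ell+bm$, yield $n\equiv-m\pmod\ell$, so since $\gcd(m,\ell)=1$ the elements $jm$ for $j=0,\dots,\ell-1$ hit every residue class modulo $\ell$. For each such $j$ the two natural representatives of that class in $\Gamma$ are $jm$ and $(\ell-j)n$; any other representative exceeds one of these by a positive multiple of $m+n=(a+1)\ell$. The identity $bm=2n-\ell$ gives $jm\le bm<2n\le(\ell-j)n$ for $0\le j\le b$, while for $j=b+1$ one has $(b+1)m-n=a\ell>0$, so
\[
\mathrm{Ap}(\Gamma,\ell)=\set{0,m,2m,\dots,bm,n}.
\]
Since $bm-n=ab-1>0$ for $a,b\ge 2$, the maximum is $bm=2ab+b$, whence $\gamma=bm-\ell+1=2ab-1$ and $\gamma+\ell=2ab+b+1$.

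For the two inequalities, write $m=q\ell+r$ with $q=\lfloor m/\ell\rfloor$ and $r=m\bmod\ell$, so $1\le r\le\ell-1$ (positivity from $\gcd(\ell,m)=1$). Then $d_2-q\ell=(b+1)m-q\ell=bm+r=2ab+b+r$; the bound $\gamma+\ell\le d_2-q\ell$ is the statement $1\le r$, and the bound $d_2-q\ell<d_3$ is $r<\ell$, after noting $d_3=2n=2ab+2b+2=2ab+b+\ell$. The ``in particular'' clauses follow: $m>\ell$ forces $q\ge 1$, so $d_2\ge\gamma+\ell+q\ell\ge\gamma+2\ell$, and chaining the two main bounds gives $d_3>\gamma+\ell$. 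The only genuine content is the Apéry-set identification, and I expect the main obstacle to be verifying cleanly that $(\ell-j)n$ never undercuts $jm$ in the range $0\le j\le b$, which is made transparent by the identity $2n=\ell+bm$.
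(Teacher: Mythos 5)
Your proof is correct, but it takes a genuinely different route from the paper's. The paper never computes $\gamma$ exactly: it passes to the subsemigroup $\Gamma_1=\ideal{\ell,m}$ with conductor $\gamma_1=(\ell-1)(m-1)=d_2-\ell+1$, observes that $2n\ge\gamma_1$ forces $\Gamma\setminus\Gamma_1=(n+\Gamma_1)\setminus\Gamma_1$, and then uses the symmetry of $\Gamma_1$ to show that the $\left\lfloor m/\ell\right\rfloor+1$ largest gaps of $\Gamma_1$, namely $\gamma_1-1-i\ell=n+(a-1-i)\ell$, all lie in $n+\Gamma_1$ and hence in $\Gamma$; this yields only the upper bound $\gamma\le\gamma_1-1-\left\lfloor m/\ell\right\rfloor\ell$, which is exactly the first inequality, and the second follows from $d_2-d_3=m-\ell<\left\lfloor m/\ell\right\rfloor\ell$. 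You instead determine the Ap\'ery set $\mathrm{Ap}(\Gamma,\ell)=\set{0,m,\dots,bm,n}$ from the relations $m+n=(a+1)\ell$ and $2n=\ell+bm$, which gives the exact value $\gamma=bm-\ell+1=2ab-1$ and reduces both inequalities to $1\le r\le\ell-1$ for $r=m\bmod\ell$; I checked your Ap\'ery computation against Examples~\ref{50} and it is consistent ($\gamma=7,11,17,47$). Your argument is sharper --- it shows the first inequality is strict unless $\ell\mid m-1$ --- at the cost of a slightly more delicate verification that no representative $\beta m+\delta n$ undercuts $\min\set{jm,(\ell-j)n}$ in each residue class (your phrase ``exceeds one of these by a positive multiple of $m+n$'' is literally accurate only for $\beta-\delta\in\set{j,j-\ell}$, but the remaining cases are dominated a fortiori, so this is a harmless imprecision). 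The paper's argument trades exactness for the soft structural facts about $\ideal{\ell,m}$ and its symmetry.
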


\begin{proof}
The subsemigroup $\Gamma_1=\ideal{\ell,m}\subset\Gamma$ has conductor
\[
\gamma_1=(\ell-1)(m-1)=2a(b+1)=n+(a-1)\ell+1\ge\gamma.
\]
To obtain a sharper upper bound for $\gamma$ we think of $\Gamma$ as obtained from $\Gamma_1$ by filling gaps of $\Gamma_1$.
Since $2n\ge\gamma_1$,
\[
\Gamma\setminus\Gamma_1=(n+\Gamma_1)\setminus\Gamma_1.
\]
The smallest elements of $\Gamma_1$ are $i\ell$ where $i=0,\dots,\left\lfloor\frac{m}{\ell}\right\rfloor$.
By symmetry of $\Gamma_1$ (see \cite{Kun70}), the largest elements of $\NN\setminus\Gamma_1$ are
\[
\gamma_1-1-i\ell=n+(a-1-i)\ell,\quad i=0,\dots,\left\lfloor\frac{m}{\ell}\right\rfloor,
\]
and contained in $n+\Gamma_1$ since the minimal coefficient $a-1-i$ is non-negative by 
\[
a-1-\left\lfloor\frac{m}{\ell}\right\rfloor\ge a-1-\frac{m}{\ell}=\frac{(a-1)b-3}{b+2}>-1.
\]
They are thus the largest elements of $\Gamma\setminus\Gamma_1$.
Their minimum attained at $i=\left\lfloor\frac{m}{\ell}\right\rfloor$ then bounds
\[
\gamma\leq\gamma_1-1-\left\lfloor\frac m{\ell}\right\rfloor\ell.
\] 
Substituting $\gamma_1+\ell-1=d_2$ yields the first particular inequality.
The second one follows from
\[
d_2-d_3=2a-b-1=m-\ell<\left\lfloor\frac{m}{\ell}\right\rfloor\ell.\qedhere
\]
\end{proof}


\begin{proof}[Proof of Corollary~\ref{44}]\
\begin{asparaenum}[(a)]

\item This follows from Lemma~\ref{21}.

\item By Lemma~\ref{43}, the inequality in Proposition~\ref{29} simplifies to $d_1+\delta\ge\gamma+\ell$.
The claim follows.

\item Suppose that 
\[
d_1+q-n\ge\gamma+\ell
\]
for some $q>n$ and $a,b\ge3$.
Set $p=\gamma-1-\ell$.
Then $n>m+\ell$ and $\Gamma\cap(m+\ell,m+2\ell)$ can include at most $n$ and some multiple of $\ell$.
Since $\ell\ge 4$ it follows that $(m+\ell,m+2\ell)$ contains a gap of $\Gamma$ and hence $\gamma-1>\ell+m$ and $p>m$.
Moreover $(a-1)b\ge4$ is equivalent to 
\[
d_1+p-m\ge\gamma+\ell.
\]
By \ref{44b}, $C$ is a set-theoretic complete intersection.

It remains to show that $C\not\cong C_0$.
This follows from the fact that $\Omega^1_{C_0}\to\KK\set{t}dt$ has valuations $\Gamma\setminus\set{0}$ whereas the $1$-form
\[
\omega=mydx-\ell xdy=\ell(m-p)t^{p+\ell-1}dt\in\Omega^1_C\to\KK\set{t}dt
\]
has valuation $p+\ell=\gamma-1\not\in\Gamma$.\qedhere

\end{asparaenum}
\end{proof}


\begin{exa}\label{50}
We discuss a list of special cases of Corollary~\ref{44}.

\begin{asparaenum}[(a)]


\item\label{50a} $a=b=2$.
The monomial curve $C_0$ defined by $(x,y,z)=(t^4,t^5,t^7)$ has conductor $\gamma=7$.
Its only admissible deformation is
\[
(x,y,z)=(t^4,t^5+st^6,t^7).
\]
However, this deformation is trivial and our method does not yield a new example. 
To see this, we adapt a method of Zariski (see \cite[Ch.~III, (2.5), (2.6)]{Zar06}).
Consider the change of coordinates
\[
\tilde{x}=x+\frac{4s}{5}y=t^4+\frac{4s}{5}t^5+\frac{4s^2}5t^6
\]
and the change of parameters of the form $\tau=t+O(t^2)$ such that $\tilde{x}=\tau^4$. 
Then $\tau=t+\frac{s}{5}t^2+O(t^3)$ and hence $y=\tau^5+O(t^7)$ and $z=\tau^7+O(t^8)$. 
Since $O(t^7)$ lies in the conductor, it follows that $C\cong C_0$. 

In all other cases, Corollary~\ref{44} yields an infinite list of new examples.


\item\label{50d} $a=3$, $b=2$.
Consider the monomial curve $C_0$ defined by $(x,y,z)=(t^4,t^7,t^9)$.
By Zariski's method from \eqref{50a}, we reduce to considering the deformation 
\[
(x,y,z)=(t^4,t^7,t^9+st^{10}).
\]
While part \ref{44c} of Corollary~\ref{44} does not apply, $C\not\cong C_0$ remains valid.
To see assume that $C_0\cong C$ induced by an automorphism $\varphi$ of $\CC\set{t}$.
Then $\varphi(x)\in\O_C$ shows that $\varphi$ has no quadratic term.
This, however, contradicts $\varphi(z)\in\O_C$.


\item\label{50b} $a=b=3$.
The monomial curve $C_0$ defined by $(x,y,z)=(t^5,t^7,t^{13})$ has conductor $\gamma=17$.
We want to satisfy $p\ge\gamma+\ell-d_1+m=9$.
The most general deformation of $y$ thus reads
\[
y=t^7+s_1t^9+s_2t^{11}+s_3t^{16}.
\] 
The parameter $s_1$ can be again eliminated by Zariski's method as in \eqref{50a}.
This leaves us with the deformation 
\[
(x,y,z)=(t^5,t^7+s_2t^{11}+s_3t^{16},t^{13}+s_4t^{16})
\]
which is non-trivial due to part \ref{44c} of Corollary~\ref{44} with $p=11$.


\item\label{50c} $a=8$, $b=3$.
The monomial curve $C_0$ defined by $(x,y,z)=(t^5,t^{17},t^{28})$ has conductor $\gamma=47$.
The condition in part~\ref{44b} of Corollary~\ref{44} requires $p\ge\gamma-d_1+m=19$.
In fact, the deformation
\[
(x,y,z)=(t^5,t^{17}+st^{18},t^{28})
\]
is not flat since $C$ has value semigroup $\Gamma'=\Gamma\cup\set{46}$.
However, $C$ is isomorphic to the general fiber of the flat deformation in $4$-space
\[
(x,y,z,w)=(t^5,t^{17}+st^{18},t^{28},t^{46}).
\]


\end{asparaenum}
\end{exa}

\bibliographystyle{amsalpha}
\bibliography{stcic}
\end{document}